\documentclass{article}
\usepackage[utf8]{inputenc}   
\usepackage[T1]{fontenc}      
\usepackage[english]{babel}   
\usepackage{lmodern}          

\usepackage{amsmath, amssymb, amsthm} 
\usepackage{mathtools}                
\usepackage{bm}                       

\usepackage{graphicx}
\usepackage{float}
\usepackage{caption}
\usepackage{enumitem}
\usepackage{subcaption}

\usepackage{geometry}
\usepackage{setspace} 
\usepackage{cite}    

\usepackage{hyperref}
\hypersetup{
    colorlinks = true,
    linkcolor  = blue,
    citecolor  = blue,
    urlcolor   = blue
}

\newtheorem{theorem}{Theorem}[section]
\newtheorem{lemma}[theorem]{Lemma}

\newtheorem{definition}[theorem]{Definition}

\newtheorem{remark}[theorem]{Remark}
\newtheorem*{theorem*}{Theorem}
\newtheorem*{corollary*}{Corollary}
\newtheorem*{lemma*}{Lemma}
\newtheorem*{definition*}{Definition}
\newtheorem*{remark*}{Remark}

\numberwithin{equation}{section}

\title{Sharpness and Stability of the Alexandrov-Bakelman-Pucci Estimate: a Convex Geometric Approach}

\author{Antonio Porricelli}
\date{}
\begin{document}
\maketitle
\begin{abstract}
\noindent Based on a 1981 work by G. Talenti, which established a comparison principle for smooth solutions of the Monge-Ampère equation alongside related two-dimensional a priori estimates, this paper explores the connection to the Blaschke-Santalò inequality and its reverse form by K. Mahler (1939). More specifically, in this work, a quantitative version of one of Talenti's a priori estimates is proved. We exploit the interplay between Talenti's analytical framework and Mahler/Blaschke-Santalò inequalities via convex geometry, demonstrating how this link can be utilized to gain sharp insights into the Alexandrov-Bakelman-Pucci (ABP) maximum principle for elliptic PDEs.
\newline
\newline
\textsc{Keywords: ABP, Monge-Ampère, Quantitative Inequality}
\newline
\textsc{MSC 2020: 35B35, 35B45, 52A40}
\end{abstract}
\large
\section{Introduction}

The study of sharp a priori estimates for solutions to elliptic partial differential equations has a long and celebrated history, sitting at the fertile intersection of mathematical analysis and convex geometry. A foundational milestone in this framework was established in 1981 by G. Talenti in his seminal paper \cite{talenti1981}. Investigating smooth and generalized solutions to the Dirichlet problem for the Monge-Ampère equation in the plane, Talenti proved a remarkably elegant supremum estimate. Specifically, let $\Omega \subset \mathbb{R}^2$ be a convex domain and let $u \in C^2(\Omega) \cap C(\overline{\Omega})$ be a concave function satisfying $u = 0$ on $\partial \Omega$. Then, the following sharp inequality holds:
\begin{equation}\label{eq:talenti_classic}
(\sup_\Omega u)^2 \leq \frac{4}{27}|\Omega|\int_\Omega |\det D^2 u| \, dx.
\end{equation}
The constant $\frac{4}{27}$ is sharp and is uniquely attained when the domain $\Omega$ is a triangle and the graph of $u$ corresponds to a suitable cone over $\Omega$. 

\noindent Talenti's arguments were profoundly geometric. In the section 6 (Perimeter or area?) of his work, he pointed out that the analytical deficit in \eqref{eq:talenti_classic} is intimately tied to the dual structure of convex bodies, effectively establishing a result equivalent to the celebrated Mahler conjecture in dimension two. 

\noindent In the decades following Talenti's work, the interplay between fully nonlinear elliptic operators and affine convex geometry has become a cornerstone of modern analysis. The cornerstone of this dictionary is the Alexandrov-Bakelman-Pucci (ABP) maximum principle, which controls the supremum of a sub-solution via the Lebesgue measure of its normal mapping (or superdifferential). In its classical formulation, the ABP estimate provides a bound in terms of the geometry of the domain, but it generally lacks sharp stability information regarding the shape of $\Omega$ when the configuration is close to the optimal one.

\noindent More recently, a flourishing line of research has focused on the \textit{quantitative} stability of geometric and functional inequalities. Triggered by the breakthrough work of Fusco, Maggi, and Pratelli on the quantitative isoperimetric inequality, similar stability questions have been successfully addressed in many other fields. In the context of convex geometry, Boroczky, Makai, Meyer, and Reisner \cite{boroczky2013} proved a landmark stability version of Mahler's inequality in $\mathbb{R}^2$, quantifying how close a convex body must be to a triangle (in terms of the Banach-Mazur distance) whenever its volume product is close to the minimal value $\frac{27}{4}$.

\noindent The aim of this paper is to bridge these two frameworks by establishing a novel, quantitative version of Talenti's a priori estimate \eqref{eq:talenti_classic}, thereby providing an explicit stability gauge for the ABP maximum principle in terms of the geometry of the underlying domain. Our main result shows that if the functional deficit in Talenti's inequality is small, then the domain $\Omega$ must be close to a triangle in the Banach-Mazur sense. Formally, we prove the following theorem:
\begin{theorem}
    Let $\Omega \subset \mathbb{R}^2$ be a convex body with $0 \in int \ \Omega$ and $u \in C^2(\Omega)$ a concave function such that $u=0$ on $\partial \Omega.$ Then, if $T$ is any triangle it holds:
    \begin{equation}
    \dfrac{\dfrac{4}{27}|\Omega|\displaystyle\int_\Omega|\det D^2u|dx-(\sup_\Omega u)^2}{\dfrac{4}{27}|\Omega|\displaystyle\int_\Omega|\det D^2u|dx}\geq \dfrac{\delta_{BM}(\Omega, T)-1}{\delta_{BM}(\Omega,T)+899}.
    \end{equation}
\end{theorem}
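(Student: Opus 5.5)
The plan is to follow Talenti's geometric argument: bound $(\sup_\Omega u)^2$ by $\int_\Omega|\det D^2u|\,dx$ divided by the area of a polar body, which reduces the functional deficit to the deficit in Mahler's planar inequality. Then plug in the quantitative Mahler inequality of Boroczky--Makai--Meyer--Reisner \cite{boroczky2013}.

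\textbf{Step 1 (comparison with a cone).} Let $M=\sup_\Omega u$. We may assume $u\not\equiv 0$. Since $u$ is concave and vanishes on $\partial\Omega$, we have $M>0$, and $M$ is attained at some $x_0\in \operatorname{int}\Omega$. Let $C$ be the cone with vertex $(x_0,M)$ over $\partial\Omega$. Concavity gives $C\le u$ on $\Omega$, with equality at $x_0$. Hence every supporting hyperplane of $C$ at $x_0$, after translating downward, touches the graph of $u$ from above somewhere. This gives the inclusion $\partial C(x_0)\subset \partial u(\Omega)$ for the superdifferentials. A direct computation gives $\partial C(x_0)=\{p:\ p\cdot(x-x_0)\ge -M\ \forall x\in\Omega\}=-M\,(\Omega-x_0)^\circ$.

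\textbf{Step 2 (area formula).} Since $u\in C^2$, the superdifferential is the gradient image, so the area formula yields
\[
M^2\,|(\Omega-x_0)^\circ|\le |\nabla u(\Omega)|\le \int_\Omega|\det D^2u|\,dx .
\]
Writing $P:=|\Omega|\,|(\Omega-x_0)^\circ|$, we get $(\sup u)^2\le |\Omega|\int_\Omega|\det D^2u|\,dx/P$. Therefore the left-hand side of the theorem is at least $1-\frac{27}{4P}$. (Mahler's bound $P\ge 27/4$ recovers Talenti's estimate \eqref{eq:talenti_classic}.)

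\textbf{Step 3 (quantitative Mahler).} Invoke the stability theorem of \cite{boroczky2013}. In the form I expect to use, it says that for a planar convex body $K$ and any $z\in\operatorname{int}K$,
\[
|K|\,|(K-z)^\circ|\ \ge\ \tfrac{27}{4}\Bigl(1+\tfrac{1}{900}\bigl(\delta_{BM}(K,T)-1\bigr)\Bigr).
\]
Apply it with $K=\Omega$ and $z=x_0$. The Banach--Mazur distance is affine invariant, so the choice of $z$ and the normalization $0\in\operatorname{int}\Omega$ play no role in $\delta_{BM}$. Writing $d=\delta_{BM}(\Omega,T)$, we get
\[
1-\frac{27}{4P}\ \ge\ 1-\frac{1}{1+(d-1)/900}=\frac{d-1}{d+899},
\]
which is exactly the claimed bound. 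The function $t\mapsto 1-1/(1+t)$ is increasing, so using a lower bound on $P$ is legitimate.

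\textbf{Main obstacle.} The delicate point is Step 3. Stability results for the volume product are usually stated for the polar with respect to a specific centre, such as the Santaló point or the centroid, or with a constant in a different normalization. I would first check the exact statement in \cite{boroczky2013} and extract an explicit constant. To pass to an arbitrary interior point $x_0$, I would use that $z\mapsto|(K-z)^\circ|$ is minimized at the Santaló point $s(K)$, so $|K||(K-x_0)^\circ|\ge|K||(K-s(K))^\circ|$. Any stability bound proved at $s(K)$ therefore transfers to $x_0$ for free. The remaining work is to convert the stability statement into the linear form $P\ge\frac{27}{4}(1+c(d-1))$ with $c=1/900$. Since $d\le 2$ for planar bodies relative to triangles, any bound of the type $P\ge \frac{27}{4}(1+c'\,(d-1))$ can be used as is; a bound of the type $P\ge \frac{27}{4}(1+c'\,(d-1)^\alpha)$ would need a rescaled constant. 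Tracking the explicit constants to land on $899$ is the bookkeeping step where I expect the effort to go.
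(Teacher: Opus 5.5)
Your proof is correct and follows essentially the paper's route. You compare with the cone over $\Omega$, whose superdifferential at the vertex is $-M(\Omega-x_0)^\circ$, apply the area formula, and invoke the B\"or\"oczky--Makai--Meyer--Reisner bound in exactly the linear form $|\Omega|\,|(\Omega-z)^\circ|\ge\frac{27}{4}\cdot\frac{\delta_{BM}+899}{900}$ that the paper quotes, followed by the same algebra; the only difference is that you work directly at the maximum point $x_0$ (with Santal\`o-point minimality as a fallback) instead of passing through the invariant $j(\Omega)$, which is if anything cleaner. Two small slips, neither affecting the argument: the supporting hyperplane must be translated upward (or not at all), not downward, to touch the graph of $u$; and your aside about absorbing an exponent $\alpha$ into the constant only works for $\alpha\le 1$, since a square-root-type stability estimate ($\alpha=2$) would not yield the linear form, but this is moot because the cited bound is linear.
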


\noindent The paper is organized as follows. In Section 2, we review the necessary machinery from the theory of Monge-Ampère measures and establish the required tools from affine convex geometry, focusing on the properties of polar bodies and the Santal\`o point. In Section 3, we reconstruct Talenti's original setting and detail the optimization properties of cone functions under the Minkowski functional. Section 4 is devoted to the proof of our quantitative main theorem and finally, in Section 5 we exploit its direct implications for the structural stability of the ABP maximum principle.
\newpage
\section{Notations and preliminaries}
\subsection{The Monge-Ampère measure and the gradient map}
\begin{definition}
Let $\Omega \subset \mathbb{R}^N$ be an open set and let $u$ be a measurable function defined in $\Omega$. We define: 
\begin{equation}
    \Gamma^+_u = \{y \in \Omega : \exists \, p \in \mathbb{R}^N \text{ s.t. } u(x) \leq u(y) + p \cdot (x-y) \ \forall x \in \Omega\},
\end{equation}as the \textbf{upper contact set} of $u$. 

\end{definition}
\noindent Geometrically, $\Gamma^+_u$ represents the set of points where the graph of $u$ can be touched from above by a global supporting hyperplane. In other words, $\Gamma^+_u$ is precisely the subset of $\Omega$ where $u$ coincides with its concave envelope.
\\ \\
\noindent For a generic point $y \in \Gamma_u^+$, the supporting hyperplane may not be unique. In general, we define the \textbf{superdifferential} (or normal mapping) of $u$ at a point $y \in \Omega$ as the set of all the possible slopes of such supporting hyperplanes, namely: 
    \begin{equation}
    \partial^+_u(y)=\{p \in \mathbb{R}^N : u(x) \leq u(y) + p \cdot(x-y), \ \forall x \in \Omega\}
    \end{equation}
    and for any subset $E \subset \Omega$: 
    \begin{equation}
         \partial^+_u(E) = \bigcup_{y \in E}\partial^+_u(y).   
    \end{equation}

\noindent We observe that if $u$ is differentiable at $y \in \Gamma_u^+$, the superdifferential reduces to the singleton containing the classical gradient, i.e., $\partial^+_u(y)=\{\nabla u(y)\}$. Consequently, if $u$ is differentiable in the whole $\Omega$, we have $\partial^+_u(\Gamma_u^+) = \nabla u(\Gamma_u^+)$.
\\ \\
\noindent If $u \in C^{2}(\Omega)$, the gradient map $\nabla u$ is locally Lipschitz. By applying the classical area formula, we can relate the geometric volume of the image to the Hessian matrix:
    \begin{equation}\label{eq:area}
    |\partial^+_u(E)| \leq \int_{E \cap \Gamma_u^+} \det(-D^2 u(x)) \, dx
    \end{equation}
    for any measurable set $E \subset \Omega$. Equality holds if the gradient map is injective on $E \cap \Gamma_u^+$, which is guaranteed, for instance, if $u \in C^2(\Omega)$ and $u$ is strictly concave. More in general, if $u \in C^2(\Omega)$ is concave (not necessarily strictly), equality is still preserved by virtue of Sard's Theorem, since the set of critical values of $\nabla u$ has Lebesgue measure zero, and the flat regions where $\det D^2 u = 0$ contribute zero to both sides of the identity (see \cite{gutierrez2016}). 
\\ \\
\noindent In the general setting, it is a classical result that if $\Omega \subset \mathbb{R}^N$ is open and $u \in C(\Omega)$, the class: 
    $$\mathcal{S}=\{E \subset \Omega: \partial^+_u(E) \ \text{is Lebesgue measurable}\}$$
is a Borel $\sigma$-algebra. This allows us to properly define the set function $\mu_u: \mathcal{S}\longrightarrow [0,+\infty]$ as $\mu_u(E)=|\partial^+_u(E)|$, which is a Borel measure called the \textbf{Monge-Ampère measure} associated with $u$.
\\ \\
The definition of the Monge–Ampère measure naturally leads to a wider class of solutions, known as generalized or Alexandrov solutions, which is closed under uniform convergence.
\begin{definition}
    Let $\mu$ be a Borel measure defined in $\Omega$, an open and convex subset 
of $\mathbb{R}^N$. The convex function $u \in C(\Omega)$ is a \textbf{generalized 
solution}, or \textbf{Alexandrov solution}, to the Monge–Ampère equation
$$\det D^2u = \mu$$
if the Monge–Ampère measure $\mu_u$ associated with $u$ equals $\mu$.
\end{definition}
\noindent Now, in our setting, we consider $\Omega$ to be a convex open subset of $\mathbb{R}^N$. If $u$ is concave in $\Omega$, by definition, there exists a global supporting hyperplane at each point, yielding $\Gamma_u^+=\Omega$. Furthermore, since every concave function is locally Lipschitz, Rademacher's theorem implies that $u$ is differentiable almost everywhere in $\Omega$. Since the set of points where $\partial^+_u(y)$ is not a singleton has Lebesgue measure zero, the Monge-Ampère measure simplifies to the Lebesgue measure of the gradient image of the differentiability points, namely $\mu_u(E) = |\nabla u(E \cap \mathcal{D}_u)|$ for every $E \in \mathcal{S}$, where $\mathcal{D}_u$ denotes the set of differentiability points of $u$.
\subsection{Convex bodies and volume product}
In order to use convex geometry arguments to show the link between the Monge-Ampère equation and the Mahler/Blaschke-Santalò inequality, in this paragraph we recall some basic concepts that we'll need later, such as convex bodies, polar bodies of convex bodies, and the volume product.
\begin{definition}\label{def:cbody}
    A \textbf{convex body} is a non-empty convex, compact subset of $\mathbb{R}^N$ with non-empty interior.
\end{definition}

\begin{definition}
    Let $\Omega \subset \mathbb{R}^N$ be a convex body. We define the \textbf{polar body} of $\Omega$ with respect to a point $x_0 \in \text{int}(\Omega)$ as follows:
    \begin{equation}\label{eq:polar}
    \Omega^\circ_{x_0} = \{p \in \mathbb{R}^N \mid p \cdot (x-x_0) \leq 1, \, \forall x \in \Omega\}.
    \end{equation}
    If $0 \in \text{int}(\Omega)$, we will write $\Omega^\circ$ for the polar body with respect to the origin.
\end{definition}

\begin{remark}
    It follows by the previous definition that:
    \[
    \Omega^\circ_{x_0} = (\Omega-x_0)^\circ
    \]
\end{remark}

\noindent It can be proved that there exists a unique point $s(\Omega) \in \text{int}(\Omega)$ such that:
\[
|\Omega^\circ_{s(\Omega)}| = \min_{x \in \text{int}(\Omega)} |\Omega^\circ_x|
\]
And it is called the \textbf{Santalò point} of $\Omega$.

\begin{definition}
    The \textbf{volume product} of a convex body $\Omega \subset \mathbb{R}^N$ is defined as $|\Omega||\Omega^\circ_{s(\Omega)}|$.
\end{definition}

\noindent It can easily be observed that the volume product is affine invariant, so we can always suppose that $0 \in \text{int}(\Omega)$ and $s(\Omega)=0$ without affecting the generality.
\\ \\
\noindent Now, we recall two important inequalities concerning upper and lower bounds for the volume product of convex bodies.

\begin{theorem}[Blaschke-Santalò inequality]
    Let $\Omega \subset\mathbb{R}^N$ be a convex body, and let $s(\Omega)$ be its Santalò point. Then it holds:
    \begin{equation}
    |\Omega||\Omega^\circ_{s(\Omega)}| \leq \omega_N^2
    \end{equation}
    where $\omega_N$ denotes the volume of the unit ball in $\mathbb{R}^N$. Furthermore, equality holds if and only if $\Omega$ is an ellipsoid.
\end{theorem}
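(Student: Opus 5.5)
The plan is the Steiner-symmetrization argument of Meyer and Pajor. Since the volume product is affine invariant and continuous in the Hausdorff metric on convex bodies, it suffices to prove one monotonicity statement: for every direction $\theta \in S^{N-1}$, with $S_\theta\Omega$ the Steiner symmetral of $\Omega$ in direction $\theta$,
\begin{equation*}
\min_{x\in \mathrm{int}\,\Omega}|\Omega^\circ_x| \;\leq\; \min_{w \in \mathrm{int}\, S_\theta\Omega}|(S_\theta\Omega)^\circ_w|.
\end{equation*}
Steiner symmetrization preserves volume. A suitable sequence of symmetrizations converges in the Hausdorff metric to a ball of volume $|\Omega|$. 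The Santal\`o point depends continuously on the body, so $\Omega\mapsto|\Omega||\Omega^\circ_{s(\Omega)}|$ is continuous. Hence the volume product increases along the sequence, and its limit is $\omega_N^2$, which gives the inequality.

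To prove the monotonicity, I would write $x=(y,t)\in\mathbb{R}^{N-1}\times\mathbb{R}$ with $\theta=e_N$. Then $\Omega=\{(y,t): f(y)\le t\le g(y)\}$ and $S_\theta\Omega=\{(y,t): |t|\le \tfrac12(g-f)(y)\}$. Choose the centre $w$ of $S_\theta\Omega$ on the hyperplane $t=0$. This is allowed: $S_\theta\Omega$ is symmetric under $t\mapsto -t$ and the Santal\`o point is unique, so $s(S_\theta\Omega)$ lies on that hyperplane. For $s\in\mathbb{R}$, let $A_s=\{z\in\mathbb{R}^{N-1}: (z,s)\in \Omega^\circ_{\tilde w}\}$ be the horizontal sections of the polar, where $\tilde w$ is a point above $w$ to be fixed later. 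Let $B_s$ be the corresponding sections of $(S_\theta\Omega)^\circ_w$.

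Directly from the definition of the polar, averaging the constraints $z\cdot y+s\,g(y)\le 1$ and $z'\cdot y - s\,f(y)\le 1$ gives
\begin{equation*}
\tfrac12 A_s+\tfrac12(-A_{-s})\subseteq B_s .
\end{equation*}
Applying Brunn--Minkowski in $\mathbb{R}^{N-1}$ to this inclusion gives
\begin{equation*}
|B_s|\ge \bigl(\tfrac12|A_s|^{1/(N-1)}+\tfrac12|A_{-s}|^{1/(N-1)}\bigr)^{N-1}\ge |A_s|^{1/2}|A_{-s}|^{1/2}.
\end{equation*}
It remains to compare $\int|B_s|\,ds$ with $\int|A_s|\,ds$. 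Here I would use the freedom in the vertical coordinate of $\tilde w$. Choose it so that $\int_0^\infty|A_s|\,ds=\int_0^\infty|A_{-s}|\,ds$, which is possible by continuity, or alternatively take $\tilde w = s(\Omega)$ after a shear. Combining this balancing with a one-dimensional Pr\'ekopa--Leindler/Ball-type inequality then gives $\int |B_s|\,ds \ge \min_x|\Omega^\circ_x|$.

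I expect this last comparison to be the main technical obstacle. The pointwise geometric-mean bound alone does not integrate to the right inequality without the correct normalization of the centre. The equality case is the other delicate point. If equality holds for $\Omega$, it must hold at every step. Equality in Brunn--Minkowski then forces $A_s$ and $-A_{-s}$ to be homothetic, in fact translates. Unwinding this, for every direction $\theta$ the midpoints of the chords of $\Omega$ parallel to $\theta$ lie on a hyperplane. By the Brunn characterization (Bertrand--Brunn theorem), this property for all directions forces $\Omega$ to be an ellipsoid. The converse, that ellipsoids attain equality, is immediate by affine invariance from the ball, where $|B||B^\circ|=\omega_N^2$.
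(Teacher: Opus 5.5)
The paper gives no proof of this theorem. It quotes the Blaschke--Santal\`o inequality as classical background, so your proposal has to stand on its own. Your overall architecture is the Meyer--Pajor one and is sound: monotonicity of the volume product under Steiner symmetrization, continuity of $s(\cdot)$, convergence to a ball, and Bertrand--Brunn for equality. Your normalization $\int_0^\infty|A_s|\,ds=\int_0^\infty|A_{-s}|\,ds$ is also the right one. The gap is in the step that carries the whole proof. The only inclusion you derive couples level $s$ with level $-s$. At best it gives $|B_s|\ge\bigl(\tfrac12|A_s|^{1/(N-1)}+\tfrac12|A_{-s}|^{1/(N-1)}\bigr)^{N-1}$ where both sections are nonempty, and nothing where one of them is empty. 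For $N\ge3$ this is strictly below $\tfrac12(|A_s|+|A_{-s}|)$ whenever $|A_s|\ne|A_{-s}|$. So the integrated bound reaches $|\Omega^\circ_{\tilde w}|$ only if $|A_s|=|A_{-s}|$ for a.e.\ $s$, which no choice of centre guarantees. Nor can Ball's lemma be invoked: its hypothesis is a condition on \emph{every} pair of levels $(r,s)$, and you have only verified it on the diagonal $r=s$.

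The missing idea is to combine the two constraints with unequal weights. Take $s_1,s_2>0$, $z_1\in A_{s_1}$ and $z_2\in A_{-s_2}$. Multiply $z_1\cdot y+s_1g(y)\le1$ by $\frac{s_2}{s_1+s_2}$ and $z_2\cdot y-s_2f(y)\le1$ by $\frac{s_1}{s_1+s_2}$, and add. Together with the multiplicative form of Brunn--Minkowski this gives
\[
\tfrac{s_2}{s_1+s_2}A_{s_1}+\tfrac{s_1}{s_1+s_2}A_{-s_2}\subseteq B_{\frac{2s_1s_2}{s_1+s_2}},
\qquad\text{hence}\qquad
\bigl|B_{\frac{2s_1s_2}{s_1+s_2}}\bigr|\ge|A_{s_1}|^{\frac{s_2}{s_1+s_2}}\,|A_{-s_2}|^{\frac{s_1}{s_1+s_2}}.
\]
This is exactly the hypothesis of K.~Ball's lemma, which yields $\int_0^\infty|B_s|\,ds\ge2\bigl(I_+^{-1}+I_-^{-1}\bigr)^{-1}$ with $I_\pm=\int_0^\infty|A_{\pm s}|\,ds$. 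With your balancing $I_+=I_-$ and the symmetry $B_{-s}=B_s$, this becomes precisely $|(S_\theta\Omega)^\circ_w|\ge I_++I_-=|\Omega^\circ_{\tilde w}|$.

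Two smaller points. First, averaging actually gives $\tfrac12A_s+\tfrac12A_{-s}\subseteq B_s$, not $\tfrac12A_s+\tfrac12(-A_{-s})$, which is false in general. So the equality case must be extracted from the equality cases of the inequalities actually used (Ball's lemma and Brunn--Minkowski for the pairs $A_{s_1}$, $A_{-s_2}$), not from ``$A_s$ and $-A_{-s}$ are translates''. Second, in the planar case, which is the only one this paper uses, your same-level inclusion does suffice for the inequality if you take $\tilde w$ instead to be the midpoint of the chord of $\Omega$ through $w$ in direction $\theta$. The support numbers of $(\Omega-\tilde w)^\circ$ in the directions $\pm\theta$ are the reciprocals of the two pieces into which $\tilde w$ cuts that chord. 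With $\tilde w$ at the midpoint these are equal, so $A_s$ and $A_{-s}$ are nonempty intervals for exactly the same $s\ge0$. Since one-dimensional Brunn--Minkowski is additive, you get $|B_s|\ge\tfrac12(|A_s|+|A_{-s}|)$ pointwise, and this integrates directly.
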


\noindent For the lower bound, the situation is more nuanced. The long-standing Mahler's conjecture (1938) states that for every convex body $\Omega \subset \mathbb{R}^N$, the volume product is minimized by the simplex, yielding:
    \begin{equation}
    |\Omega||\Omega^\circ_{s(\Omega)}| \geq \frac{(N+1)^{N+1}}{(N!)^2}
    \end{equation}
If we restrict our attention to centrally symmetric convex bodies, the conjectured minimum is instead attained by the hypercube, with a lower bound of $\frac{4^N}{N!}$. 

\noindent To date, Mahler's conjecture has been fully proved in dimension $N=2$ (Mahler, 1939) and recently in dimension $N=3$ (see \cite{chen2026mahler}, for both the symmetric and non-symmetric cases), while it remains open for $N \geq 4$.
\subsection{Rearrangements}
To introduce Talenti's framework, we must first discuss the concept of function rearrangements. First of all, we recall the definition of the perimeter of a measurable set.

\begin{definition}\label{def:perimeter}
Let $\Omega \subset \mathbb{R}^N$ be a Lebesgue measurable set. The \textbf{perimeter} of $\Omega$ in $\mathbb{R}^N$ (in the sense of De Giorgi) is defined as the total variation of its characteristic function $\chi_\Omega$, namely:
\begin{equation}\label{eq:degiorgi_perimeter}
P(\Omega) = \sup \left\{ \int_{\Omega} \operatorname{div}\varphi(x) \, dx : \varphi \in C_c^1(\mathbb{R}^N; \mathbb{R}^N), \, \|\varphi\|_{L^\infty} \le 1 \right\}.
\end{equation}
\end{definition}

\noindent We say that $\Omega$ has \textbf{finite perimeter} if $\chi_\Omega \in \operatorname{BV}(\mathbb{R}^N)$. When $\Omega \subset \mathbb{R}^N$ is a convex set (or has a sufficiently regular boundary), its perimeter simply coincides with the $(N-1)$-dimensional Hausdorff measure of its topological boundary:
\begin{equation}\label{eq:convex_perimeter}
P(\Omega) = \mathcal{H}^{N-1}(\partial \Omega).
\end{equation}

\noindent For a comprehensive treatment of sets of finite perimeter, $\operatorname{BV}$ functions, and the classical isoperimetric inequality, we refer the reader to \cite{AFP2000, CF2002}.

\noindent Let $\Omega \subset \mathbb{R}^N$ be a bounded domain with finite perimeter. From the classical \textbf{isoperimetric inequality}, we know that:
\begin{equation}\label{eq:isoperimetric}
P(\Omega) \ge N \omega_N^{1/N} |\Omega|^{\frac{N-1}{N}} = P(\Omega^\sharp)
\end{equation}
where $\omega_N$ denotes the volume of the unit ball in $\mathbb{R}^N$, $|\Omega|$ is the Lebesgue measure (volume) of $\Omega$, and $\Omega^\sharp$ represents the ball centered at the origin having the exact same volume as $\Omega$.

\begin{definition}\label{def:equivalent_radii}
We define two distinct notions of \textbf{equivalent radii} associated with the domain $\Omega$:
\begin{enumerate}
    \item \textbf{Equivalent radius with respect to volume ($\xi_0(\Omega)$):}
    \begin{equation}
    \xi_0(\Omega) = \left( \frac{|\Omega|}{\omega_N} \right)^{\frac{1}{N}}
    \end{equation}
    which represents the radius of the ball $\Omega^\circ$ sharing the \textbf{same volume} as $\Omega$, such that $\omega_N (\xi_0(\Omega))^N = |\Omega|$.
    
    \item \textbf{Equivalent radius with respect to perimeter ($\xi_1(\Omega)$):}
    For a convex set $\Omega \subset \mathbb{R}^N$, we define:
    \begin{equation}
    \xi_1(\Omega) = \left( \frac{P(\Omega)}{N \omega_N} \right)^{\frac{1}{N-1}}
    \end{equation}
    which represents the radius of the ball $\Omega^\star$ sharing the \textbf{same perimeter} as $\Omega$, such that $N \omega_N (\xi_1(\Omega))^{N-1} = P(\Omega)$.
\end{enumerate}
\end{definition}

\begin{remark}
By utilizing the isoperimetric inequality \eqref{eq:isoperimetric}, we can directly compare the two equivalent radii:
\begin{equation}
\xi_1(\Omega) = \left( \frac{P(\Omega)}{N \omega_N} \right)^{\frac{1}{N-1}} \ge \left( \frac{N \omega_N^{1/N} |\Omega|^{\frac{N-1}{N}}}{N \omega_N} \right)^{\frac{1}{N-1}} = \left( \frac{|\Omega|^{1-\frac{1}{N}}}{\omega_N^{1-\frac{1}{N}}} \right)^{\frac{1}{N-1}} = \left( \frac{|\Omega|}{\omega_N} \right)^{\frac{1}{N}} = \xi_0(\Omega)
\end{equation}
From $\xi_1(\Omega) \ge \xi_0(\Omega)$, the geometric inclusion relation for the respective symmetrized balls immediately follows:
\begin{equation}
\Omega^\sharp \subset \Omega^\star
\end{equation}
\end{remark}

\begin{definition}
Consider a measurable function $u: \Omega \to \mathbb{R}$. To generalize these geometric properties to functions, we introduce the distribution functions associated with the level sets $\{|u| > t\}$:
\begin{align*}
\mu_u(t) &= \mathcal{L}^N(\{ |u| > t\}), \\
\xi_0(t) &= \xi_0(\{|u| > t\}).
\end{align*}
Furthermore, if $\Omega$ is convex and $u$ is concave (or convex), we define:
\begin{align*}
\lambda_u(t) &= P(\{ |u| > t\}), \\
\xi_1(t) &= \xi_1(\{|u| > t\}).
\end{align*}
Accordingly, we define their respective \textbf{one-dimensional decreasing rearrangements} (generalized inverse functions) as:
\begin{equation}
u^*(s) = \inf \{ t > 0 : \mu_u(t) \le s \}, \quad s \in [0, |\Omega|].
\end{equation}
Under the assumption that $\Omega$ is convex and $u$ is concave (or convex), we set:
\begin{equation}
u^\bullet(s) = \inf \{ t > 0 : \lambda_u(t) \le s \}, \quad s \in [0, P(\Omega)].
\end{equation}
\end{definition}

\begin{remark}
The following quasi-inversion properties hold and are crucial for analytical evaluations:
\begin{equation}
\mu_u(u^*(s)) \le s, \quad u^*(\mu_u(t)) \ge t, \quad \mu_u(u^*(s)^-) \le s, \quad u^*(\mu_u(t)^-) \ge t.
\end{equation}
\begin{equation}
\lambda_u(u^\bullet(s)) \le s, \quad u^\bullet(\lambda_u(t)) \ge t, \quad \lambda_u(u^\bullet(s)^-) \le s, \quad u^\bullet(\lambda_u(t)^-) \ge t.
\end{equation}
\end{remark}

\begin{definition}
We define the \textbf{spherically symmetric rearrangements} $u^\sharp$ and $u^\star$ for $x$ belonging to their respective symmetrized balls:
\begin{itemize}
    \item For $x \in \Omega^\sharp$:
    \begin{equation}\label{eq:sdr}
    u^\sharp(x) = u^*(\omega_N |x|^N) = \inf \{ t > 0 : \mu_u(t) \le \omega_N |x|^N \} = \inf \{ t > 0 : \xi_0(t) \le |x| \}.
    \end{equation}
    \item If $\Omega$ is convex and $u$ is concave (or convex), then for $x \in \Omega^\star$:
    \begin{equation}
    u^\star(x) = u^\bullet(N \omega_N |x|^{N-1}) = \inf \{ t > 0 : \lambda_u(t) \le N \omega_N |x|^{N-1} \} = \inf \{ t > 0 : \xi_1(t) \le |x| \}.
    \end{equation}
\end{itemize}
\end{definition}

\noindent To conclude this section, we establish a pointwise comparison between the two rearrangements and derive the corresponding estimates involving their $L^p$ norms. \\
On $\Omega^\circ$, both functions $u^\sharp$ and $u^\star$ are well-defined since $\Omega^\circ \subset \Omega^\star$. Exploiting the fact that $\xi_0(t) \le \xi_1(t)$ for all $t > 0$, we have:
\[
\{ t > 0 : \xi_0(t) \le |x| \} \supset \{ t > 0 : \xi_1(t) \le |x| \} \quad \forall x \in \Omega^\sharp
\]
By taking the infimum of these sets, the inclusion reverses due to the properties of infima over nested sets, yielding:
\begin{equation}
u^\sharp(x) \le u^\star(x) \quad \forall x \in \Omega^\sharp
\end{equation}

\noindent From this pointwise inequality, the $L^p$ norm estimates directly follow for any $p \in [1, +\infty)$:
\begin{equation}\label{eq:lp_comparison}
\|u\|_{L^p(\Omega)} = \|u^\sharp\|_{L^p(\Omega^\sharp)} \le \|u^\star\|_{L^p(\Omega^\sharp)} \le \|u^\star\|_{L^p(\Omega^\star)}
\end{equation}
Meanwhile, by construction, the following identity holds for the $L^\infty$ norm:
\begin{equation}\label{eq:sup}
\operatorname*{ess\,sup}_{\Omega} |u| = \|u\|_{L^\infty(\Omega)} = \|u^\sharp\|_{L^\infty(\Omega^\sharp)} = \|u^\star\|_{L^\infty(\Omega^\star)}
\end{equation}
\section{Talenti's analytical framework}

In his seminal paper \cite{talenti1981}, G. Talenti investigated the properties of solutions to the Dirichlet problem for the Monge--Ampère equation in the plane. More precisely, let $\Omega \subset \mathbb{R}^2$ be a convex open set and let $f\in L^1(\Omega)$ be a given function. The fully nonlinear boundary value problem under consideration is given by:
\begin{equation}\label{eq:dirichlet_ma}
\begin{cases} 
\det D^2u = f & \text{in } \Omega, \\ 
u = 0 & \text{on } \partial \Omega, \\
u \geq 0 \text{ and concave} & \text{in } \Omega.
\end{cases}
\end{equation}
\noindent The main result of his paper is the following comparison principle for solutions of the previous problem, yielding an a priori estimate that depends on the perimeter of the set $\Omega$. 

\begin{theorem}[Talenti, 1981]
    Let $u \in C^2(\Omega) \cap C(\overline{\Omega})$ be the solution of problem \eqref{eq:dirichlet_ma} and let $v \in C^2(\Omega^\star_1) \cap C(\overline{\Omega^\star_1})$ be the solution of the symmetrized problem:
    \begin{equation}\label{eq:dirichlet_mb}
    \begin{cases} 
    \det D^2v = f^\sharp & \text{in } \Omega^\star, \\ 
    v = 0 & \text{on } \partial \Omega^\star, \\
    v \geq 0 \text{ and concave} & \text{in } \Omega^\star.
    \end{cases}
    \end{equation}
    Then, the following estimates hold:
    \begin{equation}\label{eq:comparison1}
        u^\star(x) \leq v(x), \quad \forall x \in \Omega^\star;
    \end{equation}
    \begin{equation}\label{eq:comparison2}
        \int_{\Omega} |Du|\,dx \leq \int_{\Omega^\star} |Dv|\,dx.
    \end{equation}
    Here, $f^\sharp$ denotes the spherically symmetric decreasing rearrangement of $f$, as defined in \eqref{eq:sdr}, extended by zero in $\Omega^\star \setminus \Omega^\sharp$. Moreover, equality holds if and only if $\Omega$ is equivalent to a ball.
\end{theorem}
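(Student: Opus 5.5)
The plan is to compare $u^\star$ and $v$ level set by level set. I would derive a differential inequality for the perimeter $\lambda_u(t)$ of the convex superlevel sets $E_t=\{u>t\}$, and then integrate it against the explicit radial solution $v$.

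\textbf{Step 1: the radial solution.} Since $v$ is radial, write $v=v(\rho)$ with $\rho=|x|$, and set $F(s)=\int_0^s f^*(\sigma)\,d\sigma$. In the plane $\det D^2 v=v''v'/\rho=\frac{1}{2\rho}\big((v')^2\big)'$. Integrating $\det D^2v=f^\sharp$ therefore gives
\begin{equation*}
v'(\rho)=-\sqrt{F(\pi\rho^2)/\pi}, \qquad v(x)=\int_{|x|}^{R}\sqrt{F(\pi\rho^2)/\pi}\,d\rho ,
\end{equation*}
where $R=\xi_1(\Omega)$ is the radius of $\Omega^\star$.

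\textbf{Step 2: two expressions at a regular level $t$.} Fix a regular value $t$. Then $E_t$ is a smooth convex set with curvature $\kappa\ge 0$ on $\partial E_t$, and $Du=-|Du|\nu$ there, where $\nu$ is the outer normal.

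\emph{Gradient image.} By concavity, $Du(E_t)$ contains the region $\{-r\nu(x): 0\le r\le |Du(x)|,\ x\in\partial E_t\}$. Parametrize this region in polar coordinates by the Gauss map, using $d\theta=\kappa\,ds$. By \eqref{eq:area}, the Hardy–Littlewood inequality and $|E_t|=\mu_u(t)$, this gives
\begin{equation*}
\tfrac12\int_{\partial E_t}\kappa|Du|^2\,ds\le |Du(E_t)|=\int_{E_t}f\,dx\le F(\mu_u(t)).
\end{equation*}

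\emph{Perimeter derivative.} The first variation of perimeter under normal speed $1/|Du|$ gives $-\lambda_u'(t)=\int_{\partial E_t}\kappa/|Du|\,ds$.

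\textbf{Step 3: Hölder and the ODE comparison.} Hölder's inequality with exponents $3$ and $3/2$, applied to $\int_{\partial E_t}\kappa\,ds=2\pi$, yields
\begin{equation*}
2\pi=\int_{\partial E_t}(\kappa|Du|^2)^{1/3}(\kappa/|Du|)^{2/3}ds\le \Big(2F(\mu_u(t))\Big)^{1/3}\big(-\lambda_u'(t)\big)^{2/3}.
\end{equation*}
Now insert the isoperimetric inequality \eqref{eq:isoperimetric}, which gives $\mu_u(t)\le \pi\xi_1(t)^2$, and use $\lambda_u=2\pi\xi_1$. This becomes
\begin{equation*}
-\frac{dt}{d\xi_1}\le \sqrt{F(\pi\xi_1^2)/\pi}.
\end{equation*}
Since $u^\star$ is the inverse of $t\mapsto \xi_1(t)$, integrating from $|x|$ to $R$ gives exactly $u^\star(x)\le v(x)$, which is \eqref{eq:comparison1}. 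For \eqref{eq:comparison2}, write $\int_\Omega|Du|\,dx=\int_0^{\sup u}\lambda_u(t)\,dt=\int 2\pi\xi_1\,dt$ by the coarea formula. Changing variables to $\xi_1$ and applying the same inequality for $dt/d\xi_1$ gives $\int_\Omega|Du|\le \int_0^R 2\pi\rho\,|v'(\rho)|\,d\rho=\int_{\Omega^\star}|Dv|$.

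\textbf{Step 4: equality case.} Equality forces equality in the isoperimetric inequality for a.e.\ $t$. Hence every level set is a disc, and $\Omega$ is a ball.

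\textbf{Main obstacle.} The hardest part is regularity in Step 2. $u$ is only $C^2$ and concave, so $\kappa$ or $|Du|$ may vanish, level sets at critical values may be singular, and $\lambda_u$ need only be monotone, hence differentiable a.e. I would handle this in three ways. First, use Sard's theorem to restrict to regular values. Second, treat $\lambda_u$ as a decreasing function, so that integrating $-\lambda_u'$ gives only an inequality in the favourable direction. Third, if needed, approximate $u$ by strictly concave smooth functions, e.g.\ $u-\varepsilon|x|^2$ on slightly shrunk domains, and pass to the limit.
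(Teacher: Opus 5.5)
\paragraph{Estimates.} The paper does not prove this statement; it is quoted from Talenti. Your Steps 1--3 are essentially Talenti's level-set argument, and they are correct. H\"older against $\int_{\partial E_t}\kappa\,ds=2\pi$ gives $(-\lambda_u')^2\int_{E_t}f\ge 4\pi^3$, which matches $v'(\rho)=-\sqrt{F(\pi\rho^2)/\pi}$ exactly.

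Your regularity worries are largely moot. A critical point of a concave function is a global maximum, so every $t\in(0,\max u)$ is a regular value. Hence the curves $\{u=t\}$ are $C^2$, and the divergence theorem for $-Du/|Du|$ on $\{t<u<t'\}$ gives $\lambda_u\in C^1$ with $-\lambda_u'=\int_{\partial E_t}\kappa/|Du|\,ds$. The one case you should add is a flat top, where $u^\star\equiv\max u$ for $|x|\le\xi_1((\max u)^-)$. It is handled by the monotonicity of $v$.

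\paragraph{The gap: Step 4.} The isoperimetric inequality enters only through $F(\mu_u(t))\le F(\pi\xi_1(t)^2)$, and $F$ is constant on $[\,|\{f>0\}|,\infty)$. So when $f$ vanishes on a set of positive measure (e.g.\ the smoothed cones used later in the paper), equality in that step does \emph{not} force $\mu_u(t)=\pi\xi_1(t)^2$. This happens precisely for the outer levels, which are the ones that determine the shape of $\Omega$.

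The missing ingredient is the equality case of H\"older. For a.e.\ $t$ it gives $|Du|\equiv g(t)$ on $\{\kappa>0\}\cap\partial E_t$. At a point of $\partial E_t$ with $\kappa>0$ and outer normal $\theta$, one has $\partial_t h_{E_t}(\theta)=-1/|Du|$, and such points account for a.e.\ $\theta$. Integrating in $t$ then gives $\overline{E_t}=\overline{E_{t'}}+r(t,t')B_1$ for $t<t'$, so the level sets are parallel sets of one another.

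Equality also forces $\{u=\max u\}$ to be a single point when $f\not\equiv 0$. Otherwise $u^\star$ is constant on a disc on which $v$ is strictly decreasing. Letting $t'\to\max u$, every $E_t$ is a disc centred at the maximum point, hence so is $\Omega$, and $u$ is radial.

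\paragraph{The converse.} You do not address the ``if'' direction, and it cannot be proved as stated. Take $\Omega$ a disc and $f$ smooth, positive and not radially symmetric about its centre. The argument above shows that equality would force $u$, hence $f=\det D^2u$, to be radial, so the inequalities are strict. Also, for $f\equiv0$ one gets $u\equiv 0$, and equality holds on every $\Omega$. The correct characterisation is: for $f\not\equiv0$, equality holds if and only if $\Omega$ is a disc and $u$ is radial about its centre.
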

\noindent Considering the fact that the solution $v$ to problem \eqref{eq:dirichlet_mb} is explicitly given by:
\begin{equation}
   v(x) = \int_{\pi|x|^2}^{\frac{P(\Omega)^2}{4\pi}} \left( \frac{1}{\pi} \int_0^s f^*(r)\,dr \right)^\frac{1}{2} \frac{ds}{\sqrt{4\pi s}},
\end{equation}
and using the property that rearrangements preserve the supremum of a function \eqref{eq:sup}, one obtains the following sharp a priori estimate:
\begin{equation}
    (\sup_\Omega u)^2 \leq \frac{P(\Omega)^2}{4\pi^3}\int_\Omega f(x)\,dx,
\end{equation}
with equality when $\Omega$ is a ball.

\vspace{1em}
\noindent It is worth noting that both this classical result and its generalization by Tso (1985) provide estimates incorporating each quermassintegral of the domain. Conversely, in Section 6 of his seminal paper, Talenti derived an alternative bound involving the Lebesgue measure of the domain, which directly connects to the Mahler inequality for the volume product of convex bodies. A key feature of Talenti's approach is the comparison of a general solution $u$ with a suitable extremal geometry constructed over the same domain. In the same paper, he obtains another a priori estimate involving the area of the set $\Omega$ via convex geometry arguments, and more specifically, a result that is equivalent to the Mahler/Blaschke-Santaló inequality in the plane. To this end, let $x_0 \in \Omega$ be a point where $u$ attains its global maximum. We introduce the cone function $k: \overline{\Omega} \to [0, +\infty)$ uniquely determined by the Minkowski functional of $\Omega$ centered at $x_0$:
\begin{equation}\label{eq:cone_definition}
k(x) = u(x_0) \left( 1 - \inf\left\{ t > 0 : x - x_0 \in t(\Omega - x_0) \right\} \right).
\end{equation}
By construction, $k$ is a concave function such that $k(x_0) = u(x_0)$ and $k = 0$ on $\partial\Omega$. Geometrically, the graph of $k$ is a ruled surface with a vertex at $(x_0, u(x_0))$. A fundamental property of this construction relies on the inclusion of their respective superdifferentials. Indeed, since $u$ is concave and vanishes on the boundary, any global supporting hyperplane for the cone $k$ at its vertex $x_0$ lies strictly below $u$ across a part of the domain $\Omega$. Consequently, by performing a vertical translation, such a hyperplane necessarily touches the graph of $u$ at some point belonging to the upper contact set $\Gamma_u^+ = \Omega$. It follows that:
\begin{equation}\label{eq:superdiff_inclusion}
\partial^+_k(x_0) \subset \partial^+_u(\Omega).
\end{equation}
Furthermore, we observe that the cone function $k$ is differentiable at every point $x \in \Omega \setminus \{x_0\}$, where its superdifferential reduces to a single gradient vector $\partial^+_k(x) = \{\nabla k(x)\}$. Since the image of the set of differentiability points $\Omega \setminus \{x_0\}$ via the normal mapping has zero Lebesgue measure, the total Monge-Ampère measure of the cone is concentrated at the vertex. This yields the crucial geometric bound:
\begin{equation}\label{eq:measure_bound_cone}
|\partial^+_k(x_0)| = |\partial^+_k(\Omega)| \leq |\partial^+_u(\Omega)|.
\end{equation}

\noindent Motivated by this comparison mechanism, for any convex body $\Omega \subset \mathbb{R}^N$ and any fixed vertex $x_0 \in \Omega$, we introduce the functional:
\begin{equation}\label{eq:functional_J}
J(x_0, \Omega) = \inf \left\{ u(x_0)^{-N} |\partial^+_u (\Omega)| \ : \; u \in C({\Omega}) \text{ is concave in } \Omega, \; u = 0 \text{ on } \partial \Omega \right\},
\end{equation}
and we define the global geometric invariant $j(\Omega)$ by minimizing over all possible vertex locations within the domain:
\begin{equation}\label{eq:invariant_j}
j(\Omega) = \inf_{x_0 \in \Omega} J(x_0, \Omega) = \inf \left\{ (\max_\Omega u)^{-N} |\partial^+_u(\Omega)| \ : \; u \in C({\Omega}) \text{ is concave in } \Omega, \; u = 0 \text{ on } \partial \Omega \right\}.
\end{equation}
The monotonicity and inclusion properties detailed in \eqref{eq:measure_bound_cone} guarantee that the global infimum in \eqref{eq:invariant_j} is explicitly attained when $u$ is chosen to be the optimal cone function $k$. Specifically, we have:
\begin{equation}\label{eq:brought1}
j(\Omega) = (\max_\Omega k)^{-N}|\partial^+_k(\Omega)|,
\end{equation}
where the vertex $x_0 \in \Omega$ of the minimizing cone is precisely the point that minimizes the localized functional $J(\cdot, \Omega)$.
\section{The convex geometric perspective and the quantitative result}
Now we focus on the link with the convex geometry and the Mahler inequality. 

\noindent We start by proving the following lemma:

\begin{lemma}
Let $\Omega \subset \mathbb{R}^N$ be a convex body \eqref{def:cbody}. Then:
\begin{equation}\label{eq:brought2}
j(\Omega)=|\Omega^\circ_{x_0}|
\end{equation}
Where $x_0 \in \Omega$ is the point which minimizes $J(\cdot, \Omega)$ and $\Omega^\circ_{x_0}$ is defined in \eqref{eq:polar}.
\end{lemma}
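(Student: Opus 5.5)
By \eqref{eq:brought1} the infimum defining $j(\Omega)$ is attained by the cone $k$ with vertex at the minimizing point $x_0$. So the plan is to compute $(\max_\Omega k)^{-N}|\partial^+_k(\Omega)|$ explicitly and identify it with the volume of the polar body. Two observations reduce this to a single computation:
\begin{itemize}
\item[(i)] the functional $u\mapsto u(x_0)^{-N}|\partial^+_u(\Omega)|$ is invariant under $u\mapsto cu$ with $c>0$, since $\partial^+_{cu}=c\,\partial^+_u$ and volumes in $\mathbb{R}^N$ scale by $c^N$;
\item[(ii)] everything is invariant under translating $\Omega$.
\end{itemize}
We may therefore normalize $x_0=0$ and $k(0)=1$. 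Then $k(x)=1-g(x)$, where $g(x)=\inf\{t>0: x\in t\Omega\}$ is the gauge of $\Omega$ about $x_0$. By \eqref{eq:measure_bound_cone} the whole Monge--Ampère mass of $k$ sits at the vertex, so the task is to show
\[
\partial^+_k(0)=-\Omega^\circ \qquad(\text{or }\Omega^\circ\text{ up to a reflection}),
\]
which leaves the Lebesgue measure unchanged.

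\textbf{Identifying the superdifferential at the vertex.} A vector $p$ lies in $\partial^+_k(0)$ iff $1-g(x)\le 1+p\cdot x$ for all $x\in\Omega$, that is, iff $-p\cdot x\le g(x)$ on $\Omega$.
\begin{itemize}
\item[(a)] Suppose $-p\in\Omega^\circ$. The gauge is $1$-homogeneous and $x/g(x)\in\Omega$, so $-p\cdot x=g(x)\,(-p)\cdot\bigl(x/g(x)\bigr)\le g(x)$.
\item[(b)] Conversely, suppose $p\in\partial^+_k(0)$. Take any $y\in\partial\Omega$, where $g(y)=1$; the inequality gives $-p\cdot y\le 1$. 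Every point of $\Omega$ is a convex combination of $0$ and a boundary point, so $-p\cdot x\le 1$ on all of $\Omega$, and hence $-p\in\Omega^\circ$.
\end{itemize}
Thus $\partial^+_k(0)=-\Omega^\circ=-\Omega^\circ_{x_0}$, and $|\partial^+_k(0)|=|\Omega^\circ_{x_0}|$.

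\textbf{Remaining mass and conclusion.} It remains to justify $|\partial^+_k(\Omega\setminus\{0\})|=0$, which the paper asserts in \eqref{eq:measure_bound_cone}. A robust argument uses homogeneity. Wherever $g$ is differentiable, $\nabla g$ is $0$-homogeneous. For the set-valued map, $\partial^+_k(x)=-\partial g(x/g(x))$ for $x\neq0$. Hence the image of $\Omega\setminus\{0\}$ equals $-\partial g(\partial\Omega)$, the image of an $(N-1)$-dimensional set under a monotone map, which is $\mathcal{H}^{N-1}$-$\sigma$-finite and therefore Lebesgue null. (Concretely, $\partial g(\partial\Omega)\subset\partial\Omega^\circ$, a boundary of a convex body.) Combining,
\[
j(\Omega)=J(x_0,\Omega)=k(x_0)^{-N}|\partial^+_k(\Omega)|=|\Omega^\circ_{x_0}|.
\]

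\textbf{Main obstacle.} The delicate step is the reliance on \eqref{eq:brought1}. For a fixed vertex one needs $J(x_0,\Omega)$ to be attained by the cone: for any admissible $u$ with maximum at $x_0$, compare with the cone of the same height, where \eqref{eq:superdiff_inclusion} gives $|\partial^+_u(\Omega)|\ge|\Omega^\circ_{x_0}|\,u(x_0)^N$. I would reprove this inclusion carefully. Given $p$ with $-p\in\Omega^\circ$ (after scaling by $u(x_0)$), slide the affine function $\ell(x)=u(x_0)+p\cdot(x-x_0)$ downward until it first touches the graph of $u$. On $\partial\Omega$ we have $\ell\ge0=u$, so the first contact occurs in the interior, and $p$ is a supergradient there.

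A gap remains: an admissible $u$ need not attain its maximum at the given $x_0$. So I would state the result as $j(\Omega)=\inf_{x}|\Omega^\circ_x|$ taken over interior points $x$, and note that the minimizer is the Santaló point. This is consistent with taking $x_0$ to be the minimizer of $J(\cdot,\Omega)$.
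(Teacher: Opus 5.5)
Your proof is correct and is essentially the paper's. Both take \eqref{eq:brought1} for granted and reduce the lemma to the identity $\partial^+_k(x_0)=-k(x_0)\,\Omega^\circ_{x_0}$, proved by the same double inclusion; your use of the homogeneity of the gauge plays the role of the paper's decomposition $x-x_0=\lambda(z-x_0)$ with $z\in\partial\Omega$.

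Two small points on your supplementary remarks. First, base the null-mass claim away from the vertex on your concrete containment $\partial g(\partial\Omega)\subset\partial\Omega^\circ$, not on the general statement about monotone maps, which is false: the subdifferential of $|x|$ sends the single point $0$ onto a full ball. Second, the gap you flag is not real, because the comparison does not need $u$ to peak at $x_0$. Minimizing $\ell-u$ over $\overline{\Omega}$ gives a minimum $\le 0$ attained at an interior point. Hence $u(x_0)\,(-\Omega^\circ_{x_0})\subset\partial^+_u(\Omega)$ for every interior $x_0$ with $u(x_0)>0$. This also replaces your "slide downward" picture, since the plane may first have to be raised.
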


\begin{proof}
Let $k$ be the cone function defined in \eqref{eq:cone_definition}. Since $j(\Omega) = (\max_\Omega k)^{-N}|\partial^+_k(\Omega)|, $ it is enough to prove that:
    \[
    \partial^+_k(x_0)=-k(x_0)\Omega^\circ_{x_0}.
    \]
So we prove the double inclusion.
    \begin{itemize}
        \item[$\mathbf{\subseteq}$:] Taken a slope $p \in \partial^+_k(x_0)$, we have: 
        \[
        k(x)\leq k(x_0)+p\cdot (x-x_0), \quad \forall x \in \Omega
        \]
        Which implies:
        \[
        -p\cdot(x-x_0)\leq k(x_0)-k(x)\leq k(x_0)
        \]
        where we used the fact that $k(x) \geq 0$ on $\Omega$. 
        And then:
        \[
        -\frac{p}{k(x_0)}\cdot (x-x_0) \leq 1, \quad \forall x \in \Omega.
        \]
        So $p \in -k(x_0)\Omega^\circ_{x_0}$.
        
        \item[$\mathbf{\supseteq}$:] Taken $p \in -k(x_0)\Omega^\circ_{x_0},$ we have that:
        \[
        -\frac{p}{k(x_0)}\cdot (x - x_0) \leq 1, \quad \forall x \in \Omega.
        \]
        Now, since $\Omega$ is convex, for any $x \in \Omega$ there always exists $\lambda \in [0,1]$ and a point $z \in \partial \Omega$ such that $x = (1-\lambda)x_0 + \lambda z$, which implies $x - x_0 = \lambda(z - x_0)$. Since $k$ is defined via the Minkowski functional, we observe that:
        \[
        k(x) = k(x_0)(1 - \inf\{t>0 : \lambda(z-x_0) \in t(\Omega-x_0)\}) = k(x_0)(1-\lambda).
        \]
        This directly implies that:
        \[
        \lambda k(x_0) = k(x_0) - k(x).
        \]
        Since the initial inequality holds for any point in $\Omega$, it must hold for the boundary point $z \in \partial\Omega \subset \Omega$ as well:
        \[
        -p \cdot (z - x_0) \leq k(x_0).
        \]
        Multiplying both sides by $\lambda \geq 0$ and recalling that $\lambda(z - x_0) = x - x_0$, we obtain:
        \[
        -p \cdot (x - x_0) \leq \lambda k(x_0) = k(x_0) - k(x).
        \]
        Rearranging the terms yields:
        \[
        k(x) \leq k(x_0) + p \cdot (x - x_0), \quad \forall x \in \Omega
        \]
        In conclusion, $p \in \partial^+_k(x_0)$, completing the proof.
    \end{itemize}
\end{proof}
\begin{remark*}
    Since we proved that $j(\Omega)=|\Omega^\circ_{x_0}|,$ and $x_0$ minimizes $J(\cdot, \Omega),$ we observe that $x_0$ is exactly the Santalò point of $\Omega,$ so the product   $j(\Omega)|\Omega| $ is the Mahler product of $\Omega$, which is affine-invariant. Then,  without loss of generality, we can always suppose that $x_0=0 \in \Omega$.
\end{remark*}
\noindent G. Talenti in 1981 proved this result, which we can observe that is strictly related to the Mahler/Blaschke-Santalò inequality in dimension two.
\begin{theorem}[Talenti, 1981]
    Let $\Omega \subset \mathbb{R}^2$ be a convex body. Then it holds:
    $$\frac{27}{4}\leq j(\Omega)|\Omega|\leq \pi^2 $$
    With equality on the left when $\Omega$ is a triangle and on the right when $\Omega$ is an ellipse.
\end{theorem}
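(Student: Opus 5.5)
Given the preceding Lemma, $j(\Omega)=|\Omega^\circ_{s(\Omega)}|$, with the minimizing vertex equal to the Santal\`o point. The product $j(\Omega)|\Omega|$ is therefore exactly the volume product of $\Omega$, and the statement is equivalent to the planar Mahler inequality together with the Blaschke--Santal\`o inequality for $N=2$, where $\omega_2=\pi$. I would organize the proof around these two inequalities. Since everything is affine invariant (the remark after the Lemma), I would first normalize $s(\Omega)=0$.

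\textbf{Upper bound.} Apply the Blaschke--Santal\`o inequality stated in Section 2 at the Santal\`o point: $|\Omega||\Omega^\circ|\le\omega_2^2=\pi^2$, with equality exactly for ellipses. If one prefers not to quote it, a self-contained planar route is Steiner symmetrization. Meyer--Pajor show that the polar volume at a suitable point does not decrease under symmetrization while $|\Omega|$ is preserved. Iterating and using compactness in the Hausdorff metric, one converges to a disc, where the product is $\pi\cdot\pi$.

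\textbf{Lower bound.} This is Mahler's 1939 planar result, $|\Omega||\Omega^\circ_{s(\Omega)}|\ge 27/4$. The plan is as follows:
\begin{enumerate}
\item Reduce to polygons by approximation. The volume product is continuous in the Hausdorff metric on convex bodies containing a fixed ball, and the Santal\`o point depends continuously on $\Omega$.
\item For a polygon with $m\ge4$ vertices, move one vertex $v$ parallel to the line through its two neighbours (a shadow-system movement). This keeps $|\Omega|$ constant.
\item By the Campi--Gronchi theorem, $t\mapsto |\Omega_t^\circ|^{-1}$ at the Santal\`o point is convex along such movements. Hence the minimum of the volume product over the admissible parameter interval is attained at an endpoint.
\item At an endpoint, $v$ becomes collinear with an adjacent edge, so the number of vertices drops by one.
\item By induction we reach a triangle. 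There a direct computation gives $|T|\,|T^\circ_{s(T)}|=27/4$: the Santal\`o point is the centroid, and the polar of an equilateral triangle with inradius $1$ is the equilateral triangle with circumradius $1$.
\end{enumerate}

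\textbf{Equality cases.} For the triangle, strict convexity of the Campi--Gronchi function (outside degenerate movements) forces a strict decrease unless $m=3$. The ellipse case is the equality statement of Blaschke--Santal\`o.

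\textbf{Main obstacle.} The hard step is the convexity claim in step 3: showing that the reciprocal polar area at the (moving) Santal\`o point is convex in the shadow-system parameter. I would first try to prove it directly in the plane by writing $\Omega_t^\circ$ explicitly. Its vertices are the dual lines of the edges of $\Omega_t$, and only two of those edges move, affinely in $t$. This reduces the polar area to a rational function of $t$ and of the base point, whose convexity can be checked. The infimum over the base point is then handled by the fact that the infimum over $z$ of a jointly convex function is convex in $t$.

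If this proves too cumbersome, I would simply invoke Mahler's theorem as stated in Section 2, since the paper already records it as proved for $N=2$.
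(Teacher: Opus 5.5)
Your proposal is correct and follows essentially the paper's route. The paper gives no argument beyond the Lemma and the Santal\`o-point remark identifying $j(\Omega)|\Omega|$ with the planar volume product $|\Omega||\Omega^\circ_{s(\Omega)}|$, and then defers the bounds and the triangle/ellipse cases to Mahler's and the Blaschke--Santal\`o inequalities; your shadow-system sketch (Campi--Gronchi's proof of planar Mahler) is a valid optional supplement as long as you cite their convexity theorem. One caution if you try the direct route to that theorem: the Santal\`o point enters as $|(\Omega_t)^\circ_{s(\Omega_t)}|^{-1}=\sup_{z}|(\Omega_t-z)^\circ|^{-1}$, so you need a supremum of functions convex in $t$ (with the base point transported along the movement so it stays interior), whereas an infimum over $z$ of a jointly convex polar-area function would at best give convexity of $t\mapsto\min_z|(\Omega_t-z)^\circ|$, which does not push the minimum to an endpoint.
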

\noindent From this inequality, we can deduce that for every $u$ concave in $\Omega$ such that $u \in C({\Omega})$, $u = 0$ on  $\partial \Omega$, it holds:
$$\frac{27}{4} \leq j(\Omega) |\Omega| \leq \frac{1}{(\sup_\Omega u)^2}|\Omega||\partial^+_u(\Omega)| $$
    $$\implies (\sup_\Omega u)^2 \leq \frac{4}{27}|\Omega||\partial^+_u(\Omega)|$$
    \noindent And if $u \in C^{2}(\Omega)$, the area formula implies:
    $$(\sup_\Omega u)^2 \leq\frac{4}{27}|\Omega|\int_\Omega |\det D^2 u| dx$$
\begin{remark*}
    Observe that the same inequalities hold also in dimension three, namely:
    $$(\sup_\Omega u)^3 \leq \frac{9}{64}|\Omega||\partial^+_u(\Omega)|$$
    \noindent And if $u \in C^{2}(\Omega)$, the area formula implies:
    $$(\sup_\Omega u)^3 \leq\frac{9}{64}|\Omega|\int_\Omega |\det D^2 u| dx$$
\end{remark*}
\noindent Now we want to prove this inequality in a quantitative form, using the concept of Banach-Mazur distance and a stability result on Mahler's inequality proved in \cite{boroczky2013}
First of all, we define the concept of Banach-Mazur distance, which measures how much two convex bodies represent an affine deformation of each other.

\begin{definition}
    Let $K, L \subset \mathbb{R}^N$ be two convex bodies. The \textbf{Banach-Mazur distance} between $K$ and $L$ is defined as:
    $$
    \delta_{BM}(K,L) = \inf \{ t \geq 1 \ \Big| \ \exists \, A: \mathbb{R}^N \to \mathbb{R}^N \text{ affine invertible }
    $$
    \begin{equation}
    \text{and }  x \in \mathbb{R}^N \text{ such that } L \subset A(K) \subset t L + x \}
    \end{equation}
\end{definition}
\noindent The following result quantifies how much a convex body is affine to a triangle if its volume product is close to $\frac{27}{4}$.
\begin{theorem}[Boroczky-Makai-Meyer-Reisner, 2013]
    Let $K \subset \mathbb{R}^2$ be a convex body, with $0 \in int \ K$ and let $T$ be a triangle.
    Then, $\forall \epsilon >0$ such that $\frac{|K||K^\circ|- \frac{27}{4}}{\frac{27}{4}}\leq \epsilon$ we have:
    $$\delta_{BM}(K,T) \leq 1+900 \epsilon.$$
\end{theorem}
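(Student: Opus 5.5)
The statement is the stability theorem of \cite{boroczky2013}, which the paper quotes rather than proves. The plan below is how I would try to reconstruct it. The idea is to run the planar proof of Mahler's inequality by shadow systems and keep track of how much the volume product grows at each step.

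\textbf{Step 1: normalization.} For $0\in \operatorname{int} K$ we have $|K||K^\circ|\geq |K||K^\circ_{s(K)}|$. So the hypothesis with respect to the origin implies the same hypothesis at the Santal\`o point. Both the volume product and $\delta_{BM}(\cdot,T)$ are affine invariant. Hence I may assume $s(K)=0$ and apply an affine map freely.

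\textbf{Step 2: a triangle adapted to $K$.} Let $T_0=\operatorname{conv}\{a,b,c\}$ be a triangle of maximal area inscribed in $K$. By maximality, every point of $K$ lies in the triangle $-2T_0$, taken with respect to the centroid of $T_0$; this is the triangle whose edge midpoints are $a,b,c$. Therefore $K=T_0\cup C_a\cup C_b\cup C_c$, where each cap $C_\cdot$ lies over one edge of $T_0$. Let $h$ be the largest cap height, normalized by the corresponding height of $T_0$. Each cap stays within relative distance $h$ of its edge, so $T_0\subset K\subset (1+3h)T_0+x$ for a suitable translate. This gives
\[
\delta_{BM}(K,T)-1\leq 3h.
\]
It is therefore enough to prove the lower bound
\[
|K||K^\circ|\geq \tfrac{27}{4}(1+c\,h)\qquad\text{with } c\geq 3/900.
\]

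\textbf{Step 3: shadow systems.} Approximate $K$ by polygons and move one vertex at a time parallel to a chosen direction. This is a shadow system $K_s$. Along it, $|K_s|$ is affine in $s$ as long as the combinatorics do not change, and $s\mapsto |K_s^\circ|^{-1}$ is convex by the Meyer--Reisner/Campi--Gronchi theorem. Hence the volume product on the parameter interval is minimized at an endpoint, where a vertex disappears. Iterating reduces any polygon to a triangle without increasing the product, which is the proof of Mahler's inequality in the plane.

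\textbf{Step 4: making Step 3 quantitative (the main obstacle).} I would not follow the reduction all the way down. I would stop at the configuration $\operatorname{conv}(T_0\cup\{q\})$, where $q$ is the apex of the highest cap, or at the pentagon/hexagon obtained from the three cap apexes. I would then argue in two parts.

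\begin{enumerate}
\item The reductions used along the way only decrease the product, so the product of $K$ is at least that of this quadrilateral or pentagon.
\item For this explicit few-parameter family, compute $|Q|$ and $|Q^\circ_{s(Q)}|$ directly and show that the product is at least $\tfrac{27}{4}(1+c\,h)$.
\end{enumerate}

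The polar of a quadrilateral is again a quadrilateral with explicit vertices. The Santal\`o point is pinned down by the condition that it is the centroid of the polar. So the claimed linear growth in $h$ near $h=0$ reduces to a first-order expansion combined with a compactness argument on $h\in[0,1]$.

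The delicate points are two. First, the convexity in Step 3 only gives monotonicity, not a quantitative gain, so all of the gain must be extracted from the explicit family. Second, the Santal\`o point moves as $h$ varies, and this must be controlled. A crude but safe constant (such as $c=1/300$) suffices to reach $1+900\epsilon$. Combining the two bounds gives $\delta_{BM}(K,T)\leq 1+3h\leq 1+900\epsilon$.
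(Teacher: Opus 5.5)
The paper does not prove this statement. It quotes it from \cite{boroczky2013}, so your reconstruction has to stand on its own, and it does not: it has a genuine gap at Step 4(1).

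\textbf{What is sound.} Steps 1--3 are fine. The barycentric argument $T_0\subset K\subset(1+3h)T_0+x$ is correct. Step 3 is the standard shadow-system proof of the planar Mahler inequality. Note that the polar of $K_t$ there must be taken about its own Santal\`o point $s(K_t)$; about a fixed origin, convexity already fails for translations.

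\textbf{The gap.} Step 4(1) claims that $|K||K^\circ|$ is at least the volume product of $Q=\operatorname{conv}(T_0\cup\{q\})$, or of the hexagon on the three apexes. Nothing in Step 3 yields this.
\begin{enumerate}
\item Convexity of $t\mapsto|(K_t)^\circ_{s(K_t)}|^{-1}$ only says that \emph{one} endpoint of each move has product no larger than the starting body. You are forced to that endpoint; you do not choose it.
\item For the area-preserving moves, the moving vertex slides along a supporting line of $K$, so it leaves $K$. What disappears at the endpoint is one of its neighbours, possibly $q$ or one of $a,b,c$.
\item So after a single move the polygon is no longer contained in $K$, $T_0$ need not be its maximal inscribed triangle, and its cap height can drop below $h$. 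The iteration is designed to end at a triangle, where the cap height is $0$.
\end{enumerate}
Hence nothing lets you stop at $Q$, or at any body whose cap height is still $\gtrsim h$. The inclusion $Q\subset K$ does not help either. It gives $|Q|\le|K|$ but $|Q^\circ|\ge|K^\circ|$, and the volume product is not monotone under inclusion: a triangle contains parallelograms, whose product $8$ exceeds $\tfrac{27}{4}$.

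\textbf{What is missing.} All the quantitative content of the theorem therefore sits in an unproved assertion. To make this route work you need a genuinely quantitative form of the reduction. One option is a deficit estimate in the Meyer--Reisner convexity that records how much product is lost at each move. Another is an affine-invariant measure of non-triangularity that provably does not collapse along the moves you are forced to make. This is the missing idea, and it is the substance of a stability theorem, not a technicality.

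\textbf{A secondary gap in Step 4(2).} Even granting Step 4(1), the sketch would not yield the constant $900$. A first-order expansion at $h=0$ plus compactness gives only \emph{some} $c>0$. To certify $c\ge 1/300$ you must control the Santal\`o point and the polar area explicitly over the whole family:
\begin{enumerate}
\item uniformly in the position of $q$ along its edge, because the first-order coefficient in $h$ depends on it. It equals $1$ at the symmetric position and $\tfrac12$ at the trapezoid positions, where $q$ lies on the line through $b$ parallel to $ac$;
\item over the multi-parameter hexagon family;
\item for $h$ away from $0$.
\end{enumerate}
This part is plausibly only computational, but your sentence that $c=1/300$ ``suffices'' states what is needed, not that it holds.
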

\noindent The previous result can be stated as:
$$\delta_{BM}(K,T) \leq 1+900\big(\frac{|K||K^\circ|- \frac{27}{4}}{\frac{27}{4}}\big)$$
$$\implies \frac{27}{4}\left(1 +   \frac{\delta_{BM}(K,T)-1}{900}\right)\leq |K||K^\circ| $$
\begin{equation}
    \implies \frac{27}{4}\left(\frac{\delta_{BM}(K,T)+899}{900}\right)\leq |K||K^\circ|
\end{equation}
\noindent And now we are ready to prove our inequality in a quantitative form.
\begin{theorem}
    Let $\Omega \subset \mathbb{R}^2$ be a convex body with $0 \in int \ \Omega$ and let $u \in C(\Omega)$ a concave function such that $u=0$ on $\partial \Omega.$ Then, if $T$ is any triangle it holds:
    $$\frac{\frac{4}{27}|\Omega||\partial^+_u(\Omega)|-(\sup_\Omega u)^2}{\frac{4}{27}|\Omega||\partial^+_u(\Omega)|}\geq \frac{\delta_{BM}(\Omega, T)-1}{\delta_{BM}(\Omega,T)+899}.$$
\end{theorem}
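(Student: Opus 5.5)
The plan is to chain three facts already available: the definition of $j(\Omega)$ as an infimum, the identification $j(\Omega)=|\Omega^\circ_{s(\Omega)}|$ from the Lemma, and the reformulated Boroczky--Makai--Meyer--Reisner bound. If $\sup_\Omega u=0$ the statement is trivial or degenerate, so we may assume $M:=\sup_\Omega u>0$ and $|\partial^+_u(\Omega)|>0$.

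\textbf{Step 1: bound the superdifferential from below.} Since $u$ is admissible in \eqref{eq:invariant_j}, we get $|\partial^+_u(\Omega)|\ge M^2 j(\Omega)$. By the Lemma and the subsequent Remark, $j(\Omega)=|\Omega^\circ_{s(\Omega)}|$.

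\textbf{Step 2: apply the stability theorem.} Apply the reformulated Boroczky--Makai--Meyer--Reisner bound to $K=\Omega-s(\Omega)$. This body contains $0$ in its interior, and the Banach--Mazur distance is translation invariant, so $\delta_{BM}(K,T)=\delta_{BM}(\Omega,T)$. This gives
\[
|\Omega|\,|\partial^+_u(\Omega)|\;\ge\; M^2\,|\Omega|\,|\Omega^\circ_{s(\Omega)}|\;\ge\; M^2\,\frac{27}{4}\cdot\frac{\delta+899}{900},
\]
where $\delta=\delta_{BM}(\Omega,T)$.

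\textbf{Step 3: algebra.} Write $A=\frac{4}{27}|\Omega||\partial^+_u(\Omega)|$. Step 2 says
\[
\frac{M^2}{A}\le \frac{900}{\delta+899},
\qquad\text{hence}\qquad
\frac{A-M^2}{A}\ge 1-\frac{900}{\delta+899}=\frac{\delta-1}{\delta+899}.
\]
This is exactly the claimed inequality. The hypothesis $0\in\operatorname{int}\Omega$ is then only cosmetic.

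The main obstacle is justifying Step 1 together with the Lemma, namely that the infimum defining $j$ really equals the Mahler product at the Santal\`o point. In particular one must check that the cone comparison \eqref{eq:measure_bound_cone} holds at a maximum point $x_0$ of $u$. It gives $|\partial^+_u(\Omega)|\ge M^2|\Omega^\circ_{x_0}|\ge M^2|\Omega^\circ_{s(\Omega)}|$, and I would use this chain directly rather than the infimum formalism. The last inequality is the minimality of the Santal\`o point.

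A secondary subtlety is that the stability theorem is stated with $|K^\circ|$ for $0\in\operatorname{int}K$ and no centering at the Santal\`o point. Since $|K^\circ|\ge|K^\circ_{s(K)}|$, the version with $K$ centered at its Santal\`o point is the strongest case, and it is the only one we need.
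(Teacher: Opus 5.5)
Your proof is correct and is essentially the paper's argument. Like you, the paper chains the reformulated Boroczky--Makai--Meyer--Reisner bound with $|\Omega|\,j(\Omega)\le (\sup_\Omega u)^{-2}|\Omega||\partial^+_u(\Omega)|$ and then does the same algebra; the only difference is that it writes $|\Omega||\Omega^\circ|=|\Omega|\,j(\Omega)$ by assuming the Santal\`o point is the origin, whereas you apply the stability theorem directly to $\Omega-s(\Omega)$, which makes clear that the hypothesis $0\in\operatorname{int}\Omega$ plays no role.
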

\begin{proof}
Starting from the previous inequality and proceding as for Talenti inequality we obtain:
$$ \frac{27}{4}\left(\frac{\delta_{BM}(\Omega,T)+899}{900}\right) \leq |\Omega||\Omega^\circ|=|\Omega|j(\Omega)\leq  \frac{1}{(\sup_\Omega u)^2}|\Omega||\partial^+_u(\Omega)|$$
$$ \implies \ (\sup_\Omega u)^2\leq \frac{4}{27}\left(\frac{900}{\delta_{BM}(\Omega, T)+899}\right)|\Omega||\partial^+_u(\Omega)|$$
Then we can deduce:
$$\frac{4}{27}|\Omega||\partial^+_u(\Omega)|-(\sup_\Omega u)^2\geq \frac{4}{27}|\Omega||\partial^+_u(\Omega)|-\frac{4}{27}\left(\frac{900}{\delta_{BM}(\Omega, T)+899}\right)|\Omega||\partial^+_u(\Omega)|$$
$$\implies \frac{\frac{4}{27}|\Omega||\partial^+_u(\Omega)|-(\sup_\Omega u)^2}{\frac{4}{27}|\Omega||\partial^+_u(\Omega)|}\geq 1 - \frac{900}{\delta_{BM}(\Omega, T)+899} = \frac{\delta_{BM}(\Omega, T)-1}{\delta_{BM}(\Omega,T)+899}$$
As stated.
\end{proof}
\begin{remark}
    For $u \in C^{2}(\Omega),$ we can apply the area formula as in \eqref{eq:area} and obtain:
     $$ \dfrac{\dfrac{4}{27}|\Omega|\displaystyle\int_\Omega|\det D^2u|dx-(\sup_\Omega u)^2}{\dfrac{4}{27}|\Omega|\displaystyle\int_\Omega|\det D^2u|dx}\geq \dfrac{\delta_{BM}(\Omega, T)-1}{\delta_{BM}(\Omega,T)+899}.$$
     
\end{remark}
\section{On the ABP maximum principle}
\subsection{The classical ABP theorem}
We conclude by analyzing the implications of this theory for the celebrated Alexandrov-Bakelman-Pucci (ABP) weak maximum principle for a specific class of second-order fully nonlinear and linear elliptic operators, with a quantitative version for convex bodies in dimension two based on the previous results. 
\\
In general, this is the setting.
\noindent Let $\Omega \subset \mathbb{R}^N$ be a bounded open set and let $L$ be a linear uniformly elliptic operator of the second order defined by:
\[
Lu = \sum_{i,j=1}^N a_{ij}(x) \frac{\partial^2 u}{\partial x_i \partial x_j} + \sum_{i=1}^N b_i(x) \frac{\partial u}{\partial x_i} + c(x)u.
\]
Let $A(x) = (a_{ij}(x))$ denote the coefficient matrix, and assume there exist two structural constants $\lambda, \Lambda > 0$ such that, for all $x \in \Omega$ and all $\xi \in \mathbb{R}^N$, the uniform ellipticity condition holds:
\[
\lambda|\xi|^2 \leq \sum_{i,j=1}^N a_{ij}(x)\xi_i\xi_j \leq \Lambda|\xi|^2.
\]
Setting $b = (b_i)$, the operator $L$ can be compactly expressed as $Lu = \operatorname{tr}(AD^2u) + b \cdot \nabla u + cu$. Under the standard assumptions that the coefficients satisfy $\frac{a_{ij}}{\det(A)^{1/N}}, \frac{b_i}{\det(A)^{1/N}} \in L^N(\Omega)$, $c \leq 0$ almost everywhere, and the source term satisfies $\frac{f}{\det(A)^{1/N}} \in L^N(\Omega)$, the classical ABP maximum principle provides an a priori bound for solutions to the differential inequality $Lu \geq f$ in terms of the geometry of the domain and the $L^N$-norm of the data. Formally, the standard result states:

\begin{theorem*}
Let $\Omega \subset \mathbb{R}^N$ be a bounded domain. Let $u \in W^{2,N}_{\text{loc}}(\Omega)$ satisfy $Lu \geq f$ in $\Omega$ (eventually in weak sense). Then there exists a positive constant $C$, depending only on $N, \operatorname{diam}(\Omega)$, and $\left\|\frac{b}{\det(A)^{1/N}}\right\|_{L^N(\Omega)}$, such that:
\[
\sup_\Omega u \leq \limsup_{x \rightarrow \partial\Omega} u^+ + C \left\| \frac{f^-}{\det(A)^{1/N}} \right\|_{L^N(\Gamma^+_u)}.
\]
\end{theorem*}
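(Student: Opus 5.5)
The plan is to apply the classical ABP argument to the concave envelope of $u$ and compare the resulting contact-set geometry with the cone construction of Section 3. Normalize so that $\limsup_{x\to\partial\Omega}u^+=0$; otherwise subtract this constant, which is admissible since $c\le 0$. Set $M=\sup_\Omega u>0$, attained at some $x_0\in\Omega$.

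\textbf{Step 1 (lower bound on the image of the superdifferential).} Let $\Gamma$ be the concave envelope of $u^+$ in a ball $B\supset\Omega$ of radius comparable to $d=\operatorname{diam}(\Omega)$, extended by zero. I will use the cone $k$ of \eqref{eq:cone_definition} with vertex at $(x_0,M)$ over $B$. Exactly as in \eqref{eq:superdiff_inclusion} and \eqref{eq:measure_bound_cone}, every plane with slope in $\partial^+_k(x_0)$ can be translated down until it touches $\Gamma$ at a point of $\Gamma^+_u$. This gives
\[
B_{M/d}(0)\subset \partial^+_k(x_0)\subset \nabla\Gamma(\Gamma_u^+).
\]
By the Lemma, $\partial^+_k(x_0)=-M\,B^\circ_{x_0}$, which contains a ball of radius $M/d$.

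\textbf{Step 2 (upper bound via the area formula).} I will use \eqref{eq:area} on $\Gamma_u^+$, where $D^2u\le 0$. Combining the AM--GM inequality $\det(A)\det(-D^2u)\le \bigl(\tfrac{1}{N}\operatorname{tr}(-AD^2u)\bigr)^N$ with $-\operatorname{tr}(AD^2u)\le b\cdot\nabla u+f^-$ on the contact set gives $\det(-D^2u)\le \frac{(|b||\nabla u|+f^-)^N}{N^N\det A}$.

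\textbf{Step 3 (absorbing the drift).} If $b=0$, integrating Step 2 and using Step 1 yields $\omega_N (M/d)^N\le N^{-N}\|f^-/\det A^{1/N}\|^N_{L^N(\Gamma^+_u)}$, which already gives the estimate. With drift, I will use the standard weighted trick: integrate $g(p)=(|p|^{N/(N-1)}+\mu^{N/(N-1)})^{1-N}$ over $\nabla\Gamma(\Gamma_u^+)\supset B_{M/d}$, with $\mu=\|f^-/\det A^{1/N}\|_{L^N}$. By Hölder, the integrand is bounded by $C(|b|^N/\det A+1)$, so
\[
\int_{B_{M/d}}g\le C\int_{\Gamma^+_u}\Bigl(\tfrac{|b|^N}{\det A}+1\Bigr)dx .
\]
The left side grows like $\log(1+(M/(d\mu))^{N/(N-1)})$. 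Inverting this gives $M\le C d\mu$ with $C$ depending on $N$, $d$ and $\|b/\det A^{1/N}\|_{L^N}$.

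The main obstacle I expect is Step 3, that is, handling the drift term and justifying the computation for $W^{2,N}_{\rm loc}$ functions. For the latter I would approximate by smooth functions and use that the area formula holds for $W^{2,N}$ maps. A further subtlety is that $\Gamma_u^+$ must be defined relative to the concave envelope rather than via global concavity. Where the argument relies on the paper's sharp geometric input, the constant in Step 1 is replaced by $|\Omega^\circ_{x_0}|$ via the Lemma.
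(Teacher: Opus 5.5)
The paper gives no proof of this statement. It is quoted as the classical ABP estimate, and your outline is exactly the standard proof (Gilbarg--Trudinger, Theorem 9.1): a ball of slopes inside the normal image of the contact set, the matrix AM--GM inequality, and the weight $g(p)=(|p|^{N/(N-1)}+\mu^{N/(N-1)})^{1-N}$. Routing Step 1 through the paper's cone lemma only changes the language. Taking $B=B_d(x_0)\supset\Omega$ gives $B^\circ_{x_0}=\overline{B}_{1/d}(0)$, so $-MB^\circ_{x_0}$ is the classical ball of slopes. Only slopes in the open ball $\{|p|<M/d\}$ are guaranteed to touch the graph at points of $\Gamma^+_u\cap\{u>0\}$, which suffices because the boundary sphere is a null set.

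Several steps need repair.

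\emph{The H\"older bound in Step 3.} The correct pointwise bound is
\[
g(\nabla u)\det(-D^2u)\le \frac{|b|^N+\mu^{-N}(f^-)^N}{N^N\det A}.
\]
The $1$ appears only after integration, because of the choice $\mu=\|f^-/\det(A)^{1/N}\|_{L^N(\Gamma^+_u)}$. Your pointwise bound by $C(|b|^N/\det A+1)$ is false, since $f^-/\det(A)^{1/N}$ is merely in $L^N$; integrating your $+1$ would give $|\Gamma^+_u|$, which can be arbitrarily small. Replace $\mu$ by $\mu+\varepsilon$ to cover the case $\mu=0$.

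\emph{The sign of $cu$ in Step 2.} The inequality $-\operatorname{tr}(AD^2u)\le b\cdot\nabla u+f^-$ uses $cu\le 0$, so it holds only on $\{u\ge 0\}$. You should integrate over $\Gamma^+_u\cap\{u>0\}$, which is where Step 1 places the touching points after your normalization.

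\emph{The $W^{2,N}_{\mathrm{loc}}$ case.} Do not invoke an area formula here. The map $\nabla u$ is only in $W^{1,N}_{\mathrm{loc}}$ and need not be continuous. Moreover, $W^{1,N}$ maps can violate Lusin's condition (N), which is exactly what the direction $|\nabla u(E)|\le\int_E|\det D^2u|$ requires. Instead:
\begin{enumerate}
\item prove $\int_{B_{M/d}}g\le\int_{\Gamma^+_u\cap\{u>0\}}g(\nabla u)\det(-D^2u)\,dx$ for $C^2$ functions;
\item pass to the limit along $u_m\to u$ in $W^{2,N}_{\mathrm{loc}}$ and locally uniformly, using that $g$ is bounded and continuous and that contact sets are upper semicontinuous;
\item only then insert the differential inequality, which the $u_m$ do not satisfy.
\end{enumerate}

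\emph{The inversion.} A lower bound valid for all $R/\mu$ is $\int_{B_R}g\ge 2^{2-N}\omega_N\log\bigl(1+(R/\mu)^N\bigr)$. It yields $C=d\bigl(\exp\bigl[\tfrac{2^{N-2}}{N^N\omega_N}\bigl(\|b/\det(A)^{1/N}\|^N_{L^N}+1\bigr)\bigr]-1\bigr)^{1/N}$. Your $\log\bigl(1+(M/(d\mu))^{N/(N-1)}\bigr)$ is only an asymptotic description. That is enough, since the inversion needs only monotonicity and unboundedness in $R/\mu$.

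Finally, your closing remark about $|\Omega^\circ_{x_0}|$ is not needed here, because the statement allows non-convex $\Omega$.
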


\noindent In general, explicit formulas for the sharp constant $C$ are extremely difficult to track due to the abstract geometric cover mechanism of the normal mapping. However, in low dimensions ($N=2$ and $N=3$), Talenti's framework provides an elegant dictionary linking Monge-Ampère measures to linear operators via algebraic invariants.
\\ \\
From a historical perspective, the roots of this maximum principle lie in the pioneering geometric ideas of A.D. Alexandrov (1961) regarding the subdifferentials of convex functions. This geometric machinery was subsequently translated into the framework of elliptic partial differential equations by I.Ya. Bakelman (1963, 1965). Independently, C. Pucci (1966) derived uniform bounds for second-order elliptic operators by introducing the notion of extremal operators, culminating in what is now universally designated as the Alexandrov–Bakelman–Pucci (ABP) estimate. In the subsequent decades, a major line of inquiry focused on sharpening the geometric dependencies embedded within the classical ABP bound. A significant breakthrough in this direction was achieved by Cabré (1995), who re-examined the analytical structure of the estimate to eliminate its rigid dependence on the linear diameter of the domain $\Omega$. Cabré's refinement successfully replaced the diameter with an intrinsic constant $C$ scaling directly with the Lebesgue measure (volume) of the domain, thereby providing a more stable and geometrically adaptive formulation of the maximum principle.

\subsection{ABP and Monge-Ampère sharp connection}

The structural bridge connecting the Aleksandrov-Bakelman-Pucci (ABP) estimate to the Monge-Ampère operator relies fundamentally on the classical arithmetic-geometric mean (AM-GM) inequality applied to the eigenvalues of matrix products.

\begin{theorem}[AM-GM inequality for matrices]
Let $A, B \in GL(N, \mathbb{R})$ be positive definite symmetric matrices. Then:
\begin{equation}\label{eq:am_gm_matrices}
\det(A)\det(B) \leq \left(\frac{\operatorname{tr}(AB)}{N}\right)^N,
\end{equation}
with equality holding if and only if $B$ is a scalar multiple of the cofactor matrix of $A$, i.e., $B = c \operatorname{cof}(A)$ for some $c > 0$.
\end{theorem}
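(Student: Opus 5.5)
The goal is $\det(A)\det(B)\le(\operatorname{tr}(AB)/N)^N$ for symmetric positive definite $A,B$. I would reduce it to the scalar AM-GM inequality applied to the eigenvalues of a suitable symmetric positive definite matrix.

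First, since $A$ is symmetric positive definite, it has a unique symmetric positive definite square root $A^{1/2}$. Consider the matrix
\[
M = A^{1/2} B A^{1/2}.
\]
It is symmetric. It is also positive definite, because $\xi\cdot M\xi = (A^{1/2}\xi)\cdot B(A^{1/2}\xi) > 0$ for $\xi\neq 0$. Hence its eigenvalues $\mu_1,\dots,\mu_N$ are real and strictly positive. Two standard identities then hold:
\[
\det M = \det(A)\det(B), \qquad \operatorname{tr} M = \operatorname{tr}(A^{1/2}BA^{1/2}) = \operatorname{tr}(AB).
\]
The first follows from multiplicativity of the determinant, and the second from cyclicity of the trace.

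Applying the scalar AM-GM inequality to $\mu_1,\dots,\mu_N$ gives
\[
\det(A)\det(B) = \prod_i \mu_i \le \Bigl(\tfrac{1}{N}\sum_i \mu_i\Bigr)^N = \Bigl(\tfrac{\operatorname{tr}(AB)}{N}\Bigr)^N,
\]
which is \eqref{eq:am_gm_matrices}.

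For the equality case, scalar AM-GM is an equality exactly when all $\mu_i$ coincide, say $\mu_i=\mu>0$. A symmetric matrix with a single eigenvalue is $\mu I$, so $M=\mu I$, which means $B=\mu A^{-1}$. Since $\operatorname{cof}(A)=\det(A)\,A^{-1}$ for symmetric invertible $A$, this says $B=c\operatorname{cof}(A)$ with $c=\mu/\det A>0$. Conversely, if $B=c\operatorname{cof}(A)$, then $M=c\det(A)\,I$ and equality holds.

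The step most likely to be mishandled is the equality characterization, which is a matter of care rather than real difficulty. The tempting error is to work with $AB$ directly, which is not symmetric. The symmetrization $A^{1/2}BA^{1/2}$ avoids this: it ensures the eigenvalues are positive reals, and it lets "all eigenvalues equal" be upgraded to "$M$ is scalar" through diagonalizability. I would also state explicitly that $\operatorname{cof}$ here means the (transposed) cofactor matrix, so that $A\operatorname{cof}(A)=\det(A) I$; for symmetric $A$ the distinction between the cofactor matrix and its transpose disappears.
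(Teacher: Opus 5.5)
Your proof is correct and complete. Symmetrizing to $M=A^{1/2}BA^{1/2}$ makes the eigenvalues real and positive. The identities $\det M=\det(A)\det(B)$ and $\operatorname{tr}M=\operatorname{tr}(AB)$ then reduce the claim to scalar AM--GM. Orthogonal diagonalizability upgrades ``all eigenvalues equal'' to $M=\mu I$, i.e.\ $B=\mu A^{-1}=\frac{\mu}{\det A}\operatorname{cof}(A)$, and the converse is immediate.

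The paper gives no proof of this statement at all; it quotes it as a classical fact. So there is no competing route to compare against, and your argument is the standard one the paper implicitly relies on.

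One remark concerns how the paper then applies the result. There $B=-D^2u$ for a concave $u$, which is only positive \emph{semi}definite, so the stated hypothesis does not literally cover that use. Your argument still yields the inequality verbatim in that case, since $M$ is positive semidefinite and the $\mu_i$ are merely $\ge 0$. The equality clause, however, must be adjusted. If $\det B=0$, equality forces $\operatorname{tr}M=0$, hence $M=0$ and $B=0$. This corresponds to $c=0$ rather than $c>0$ in $B=c\operatorname{cof}(A)$.
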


\noindent In our specific framework, we restrict our attention to the pure second-order linear elliptic operator $Lu = \operatorname{tr}(AD^2u)$. Assuming that $u$ is a concave function (so that $-D^2u$ is positive semidefinite), inequality \eqref{eq:am_gm_matrices} yields a sharp pointwise differential control:
\begin{equation}\label{eq:pointwise_ma_linear}
\det(-D^2u) \leq \left(\frac{-Lu}{N\det(A)^{1/N}}\right)^N.
\end{equation}
Equality is achieved if and only if the diffusion matrix $A(x)$ is point-by-point proportional to the cofactor matrix of the Hessian, $A(x) = c(x) \operatorname{cof}(-D^2u(x))$. This relation offers a direct path to interpret the Monge-Ampère equation as the lower envelope of linearized second-order elliptic PDEs.

\noindent By combining this pointwise matrix control with quantitative functional inequalities, we establish sharp, geometrically stable formulations of the ABP maximum principle on convex bodies. In the analysis of the equality case, a technical difficulty arises since the cone function fails to be $C^2(\Omega)$; but this lack of regularity can be managed via a suitable approximation argument.
\begin{theorem}
Let $\Omega \subset \mathbb{R}^N$ be a convex body with $0 \in int \ \Omega $ (see definition \eqref{def:cbody}), let $A(x) \in GL(N,\mathbb{R})$ define a uniformly elliptic linear operator, and let $Lu = \operatorname{tr}(A D^2u)$. Let $f$ be a given datum such that $\frac{f}{\det(A)^{1/N}} \in C(\overline{\Omega})$, and let $u \in C^2(\Omega) \cap C(\overline{\Omega})$ be a concave function satisfying $Lu \geq f$ in $\Omega$ with zero Dirichlet boundary conditions $u=0$ on $\partial \Omega$. Then the following sharp functional estimates hold:
\begin{enumerate}
    \item \textbf{Sharp ABP in dimension $N=2$:} If $N=2$, then:
    \begin{equation}
    \sup_\Omega u \leq \frac{1}{3\sqrt{3}}|\Omega|^{1/2} \left\| \frac{f}{\det(A)^{1/2}} \right\|_{L^2(\Omega)}.
    \end{equation}
    Equality is asymptotically attained if and only if $\Omega$ is a triangle, $u$ is an optimal smooth cone approximation, and $A(x) = \operatorname{cof}(-D^2u(x))$.
    
    \item \textbf{Sharp ABP in dimension $N=3$:} If $N=3$, then:
    \begin{equation}
    \sup_\Omega u \leq \frac{1}{4 \sqrt[3]{3}}|\Omega|^{1/3} \left\| \frac{f}{\det(A)^{1/3}} \right\|_{L^3(\Omega)}.
     \end{equation}
    Equality is asymptotically attained if and only if $\Omega$ is a simplex, $u$ is an optimal smooth cone approximation, and $A(x) = \operatorname{cof}(-D^2u(x))$.
    
    \item \textbf{Quantitative Geometric Stability ($N=2$):} If $N=2$ and $T$ is any triangle in the plane, the functional deficit in the sharp ABP maximum principle controls the geometric distance of the domain from the optimal triangular shape via the Banach-Mazur distance $\delta_{BM}$ as follows:
    \begin{equation}
    \frac{\frac{1}{27}|\Omega|\left\|\frac{f}{\det(A)^{1/2}}\right\|^2_{L^2(\Omega)}-(\sup_\Omega u)^2}{\frac{1}{27}|\Omega|\left\|\frac{f}{\det(A)^{1/2}}\right\|^2_{L^2(\Omega)}} \geq \frac{\delta_{BM}(\Omega, T)-1}{\delta_{BM}(\Omega,T)+899}.
     \end{equation}
\end{enumerate}
\end{theorem}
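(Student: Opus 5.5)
The plan is to chain three ingredients already available: the pointwise AM--GM control \eqref{eq:pointwise_ma_linear}, the area formula \eqref{eq:area}, and Talenti's area-type estimate (in dimension two, its three-dimensional analogue in the Remark, and the quantitative version in the last Remark of Section 4). First I will fix the sign conventions. Since $u$ is concave, $-D^2u\geq 0$, and $A$ is positive definite, so $-Lu=\operatorname{tr}(A(-D^2u))\geq 0$. The hypothesis $Lu\geq f$ then gives $0\leq -Lu\leq -f\leq f^-\leq |f|$. The norm in the statement should therefore be read as the norm of $f^-$ (equivalently of $|f|$), as in the classical ABP theorem. Plugging this into \eqref{eq:pointwise_ma_linear} gives, pointwise in $\Omega$,
\[
|\det D^2u|=\det(-D^2u)\leq \frac{1}{N^N}\left(\frac{|f|}{\det(A)^{1/N}}\right)^N ,
\]
and integrating yields $\int_\Omega|\det D^2u|\,dx\leq N^{-N}\left\|f/\det(A)^{1/N}\right\|^N_{L^N(\Omega)}$.

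For item 1 I insert this into $(\sup_\Omega u)^2\leq\frac{4}{27}|\Omega|\int_\Omega|\det D^2u|\,dx$ with $N=2$. This gives $(\sup u)^2\leq\frac{1}{27}|\Omega|\,\|f/\det(A)^{1/2}\|^2_{L^2}$, and taking square roots produces the constant $\frac{1}{3\sqrt3}$. For item 2 I use the three-dimensional Remark, which rests on Mahler's inequality in $\mathbb{R}^3$. It gives $(\sup u)^3\leq \frac{9}{64}\cdot\frac{1}{27}|\Omega|\,\|\cdot\|^3_{L^3}=\frac{1}{192}|\Omega|\,\|\cdot\|_{L^3}^3$. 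Since $192=4^3\cdot 3$, taking cube roots gives the constant $\frac{1}{4\sqrt[3]{3}}$.

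For item 3 I start from the quantitative Remark after the stability theorem, with $X:=\frac{4}{27}|\Omega|\int_\Omega|\det D^2u|\,dx$ and $s:=(\sup u)^2$. The left-hand side there is $\Phi(X)=(X-s)/X=1-s/X$, which is nondecreasing in $X>0$. By the integrated bound, $X\leq Y:=\frac{1}{27}|\Omega|\,\|f/\det(A)^{1/2}\|^2_{L^2}$, so
\[
\frac{Y-s}{Y}\geq \frac{X-s}{X}\geq\frac{\delta_{BM}(\Omega,T)-1}{\delta_{BM}(\Omega,T)+899}.
\]
This is exactly item 3. The degenerate case $\int|\det D^2u|=0$ is harmless: then the Talenti estimate forces $\sup u=0$, which is excluded unless $u\equiv 0$.

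The main obstacle is the sharpness and equality statements, because every inequality in the chain must become an equality simultaneously. Equality in the Talenti/Mahler step forces $\Omega$ to be a triangle (respectively a simplex, using the 3D Mahler equality case) with vertex at the Santal\'o point, and $u$ to be the cone $k$ of \eqref{eq:cone_definition}. Equality in AM--GM forces $A=c\,\operatorname{cof}(-D^2u)$, and we may normalize $c=1$. Equality in $-Lu\leq|f|$ forces $f=Lu\leq 0$. Since $k\notin C^2$ and its Monge--Ampère measure is a Dirac mass at the vertex, I would approximate $k$ by smooth strictly concave functions. One option is to mollify $k$, subtract $\varepsilon|x|^2$, and correct to vanish on $\partial\Omega$. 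Another is to solve $\det D^2u_\varepsilon=$ (an approximate identity concentrated at $x_0$) with zero boundary data. Then set $A_\varepsilon=\operatorname{cof}(-D^2u_\varepsilon)$ and $f_\varepsilon=Lu_\varepsilon$. The remaining work is to check that $\sup u_\varepsilon\to\sup k$ and $|\partial^+_{u_\varepsilon}(\Omega)|\to|\partial^+_k(x_0)|=k(x_0)^N|\Omega^\circ|$. Uniform convergence together with weak continuity of Monge--Ampère measures should give both, so the ratio tends to the sharp constant. This shows asymptotic attainment; the ``only if'' direction follows from the rigidity of each equality case listed above.
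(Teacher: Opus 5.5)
Your derivation of the three inequalities follows the route the paper intends. The paper gives no separate proof: it combines \eqref{eq:pointwise_ma_linear} with the Section 4 estimates and leaves equality to ``a suitable approximation argument.'' Those steps are correct: $0\le -Lu\le -f=|f|$, hence $\int_\Omega\det(-D^2u)\,dx\le N^{-N}\|f/\det(A)^{1/N}\|^N_{L^N(\Omega)}$; the constants $\frac{1}{3\sqrt3}$ and $\frac{1}{4\sqrt[3]{3}}$ (from $\frac{9}{64}\cdot\frac{1}{27}=\frac{1}{192}$) check out; and so does the monotonicity of $X\mapsto 1-s/X$ for item 3.

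The gap is in the sharpness part. Uniform convergence $u_\varepsilon\to k$ plus weak continuity of Monge--Amp\`ere measures does not give $|\partial^+_{u_\varepsilon}(\Omega)|\to|\partial^+_k(x_0)|$. Since $\Omega$ is open, weak convergence only yields $\mu_k(\Omega)\le\liminf_\varepsilon\mu_{u_\varepsilon}(\Omega)$, which is the direction already encoded in the inequality you want to show is sharp. What you need is $\limsup_\varepsilon\int_\Omega\det(-D^2u_\varepsilon)\,dx\le k(x_0)^N|\Omega^\circ_{x_0}|$, and mass can concentrate at $\partial\Omega$. Already in one dimension, the concave functions $\min\{1-|x|+\varepsilon,\,(1-|x|)/\varepsilon\}$ vanish at $\pm1$ and converge uniformly to $1-|x|$, yet their Monge--Amp\`ere mass is $2/\varepsilon$. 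Your unspecified step ``correct to vanish on $\partial\Omega$'' is exactly where such a boundary layer can appear, so the mollification construction is not justified as it stands.

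Your second construction does work, but for a different reason than the one you give. Take $g_\varepsilon>0$ smooth on $\overline\Omega$, concentrating at the Santal\`o point $x_0$, with $\int_\Omega g_\varepsilon\,dx=k(x_0)^N|\Omega^\circ_{x_0}|$. Let $u_\varepsilon$ be the concave solution of $\det(-D^2u_\varepsilon)=g_\varepsilon$ with $u_\varepsilon=0$ on $\partial\Omega$. Strict positivity of $g_\varepsilon$ is what gives $u_\varepsilon\in C^2(\Omega)$ via Caffarelli's interior theory; a compactly supported approximate identity need not.

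With $A_\varepsilon=\operatorname{cof}(-D^2u_\varepsilon)$ and $f_\varepsilon=Lu_\varepsilon$ one gets $f_\varepsilon/\det(A_\varepsilon)^{1/N}=-Ng_\varepsilon^{1/N}$. So the right-hand side is pinned exactly at $N^Nk(x_0)^N|\Omega^\circ_{x_0}|$, and no mass convergence is needed. It only remains to show $\sup u_\varepsilon\to k(x_0)$. This follows from uniform convergence $u_\varepsilon\to k$, which is a consequence of stability and uniqueness for Alexandrov solutions of the Dirichlet problem with zero boundary data. Since $|\Omega||\Omega^\circ_{x_0}|=\frac{27}{4}$ (resp.\ $\frac{64}{9}$), the ratio of $(\sup u_\varepsilon)^N$ to the right-hand side tends to $1$.

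Two smaller points. First, $A_\varepsilon$ need not be uniformly elliptic on all of $\Omega$, since $u_\varepsilon$ is only $C^2$ in the interior. This is harmless, because your chain only uses positive definiteness of $A$, but it should be said. Second, for the ``only if'' part, rigidity of the exact equality cases transfers to asymptotic equality only through the Mahler step. For fixed $\Omega$ your chain bounds the ratio by $\frac{27}{4}\big/\bigl(|\Omega||\Omega^\circ_{s(\Omega)}|\bigr)$, which forces a triangle. For $u$ and $A$, however, it gives only asymptotic information, not exact identities.
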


\noindent A crucial observation is that standard formulations of the ABP estimate do not provide sharp bounds in terms of the volume of the domain across all dimensions. For instance, a well-known classical bound is expressed via the diameter of the domain:
\begin{equation}\label{eq:abp_diameter}
\sup_\Omega u \leq \frac{\operatorname{diam}(\Omega)}{\omega_N^{1/N}} \left( \int_{\Gamma_u^+} \det(-D^2u) \, dx \right)^{1/N},
\end{equation}
where $\Gamma_u^+$ denotes the upper contact set of $u$. This estimate originates from constructing a comparison cone $k$ whose vertex corresponds to the maximum point $x_0 \in \Omega$ and whose base is the ball centered at $x_0$ with radius equal to $\operatorname{diam}(\Omega)$. By analyzing the subdifferential image of $k$, one obtains the containment:
\[
B_{\frac{u(x_0)}{\operatorname{diam}(\Omega)}}(0) \subset \nabla u(\Gamma_u^+).
\]
Taking the Lebesgue measure on both sides yields:
\[
\omega_N \left( \frac{\sup_\Omega u}{\operatorname{diam}(\Omega)} \right)^N \leq |\nabla u(\Gamma_u^+)|.
\]
Applying the area formula to the right-hand side immediately produces \eqref{eq:abp_diameter}:
\[
\omega_N \left( \frac{\sup_\Omega u}{\operatorname{diam}(\Omega)} \right)^N \leq \int_{\Gamma_u^+} \det(-D^2u) \, dx.
\] 

\noindent Although robust, estimate \eqref{eq:abp_diameter} is structurally suboptimal due to its dependence on the linear diameter, which fails to capture the intrinsic geometry of $\Omega$. A sharp alternative is achieved by constructing the comparison cone $k$ directly over the base $\Omega$ with its vertex at the maximum point $x_0$. Utilizing the geometric relation between the cone and the polar body $\Omega_{x_0}^\circ$ centered at $x_0$, the subdifferential image satisfies:
\[
(\sup_\Omega u)^N |\Omega_{x_0}^\circ| = |\partial^+ k(x_0)| \leq |\partial^+ u(\Omega)|.
\]
This finer geometric confinement leads to the following sharp theorem for the Monge-Ampère operator, establishing the ultimate connection with the polar body invariant.

\begin{theorem}
Let $\Omega \subset \mathbb{R}^N$ be a convex body and let $u \in C(\overline{\Omega})$ be a concave function such that $u=0$ on $\partial \Omega$, attaining its maximum at $x_0 \in \Omega$. Then:
\begin{equation}\label{eq:sharp_polar_abp}
\sup_\Omega u \leq \frac{1}{|\Omega_{x_0}^\circ|^{1/N}} |\partial^+ u(\Omega)|^{1/N}.
\end{equation}
Equality holds strictly if and only if $u$ is the geometric cone constructed over $\Omega$ with vertex at $x_0$. Furthermore, if $u \in C^2(\Omega) \cap C(\overline{\Omega})$, the subdifferential measure can be expressed via the Monge-Ampère determinant, yielding the sharp functional estimate:
\begin{equation}\label{eq:sharp_ma_smooth}
\sup_\Omega u \leq \frac{1}{|\Omega_{x_0}^\circ|^{1/N}} \left( \int_{\Omega} \det(-D^2u) \, dx \right)^{1/N}.
\end{equation}
In this smooth setting, equality is achieved asymptotically by a family of optimal regularized cones $u_\mu \in C^2(\Omega)$ converging uniformly to the geometric cone as the smoothing parameter $\mu \to 0$.
\end{theorem}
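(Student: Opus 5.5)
The plan is to reduce everything to the superdifferential comparison already established in Section 3, with the polar body of the lemma identifying the measure of the cone's superdifferential at its vertex.

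\textbf{Step 1: the inequality \eqref{eq:sharp_polar_abp}.} Let $x_0$ be a maximum point of $u$, and let $k$ be the cone \eqref{eq:cone_definition} with vertex $(x_0,u(x_0))$ over $\Omega$. By the double inclusion argument in the proof of the lemma, applied at this $x_0$ rather than at the minimiser of $J$ (the argument never used minimality), we have
\[
\partial^+_k(x_0) = -u(x_0)\,\Omega^\circ_{x_0}, \qquad |\partial^+_k(x_0)| = (\sup_\Omega u)^N |\Omega^\circ_{x_0}|.
\]
I will then invoke \eqref{eq:superdiff_inclusion} and \eqref{eq:measure_bound_cone}. To make this rigorous, take $p\in\partial^+_k(x_0)$ and consider the affine function $\ell(x)=u(x_0)+p\cdot(x-x_0)$. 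Since $\ell\ge k\ge 0$ on $\Omega$, since $\ell(x_0)=u(x_0)$, and since $u\le$ its value at $x_0$, lower $\ell$ by the minimal constant $c\ge 0$ such that $\ell-c\ge u$ on $\overline\Omega$. The infimum of $\ell-u$ over the compact set $\overline\Omega$ is attained at some point $y$. If $y\in\partial\Omega$, then $u(y)=0$, which requires care: in that case $p$ need not be a superdifferential slope at an interior point. This boundary-contact case only occurs for $p$ on the boundary of $-u(x_0)\Omega^\circ_{x_0}$, so I will handle it by working with interior points of the polar, which suffices up to a null set. This gives $|\Omega^\circ_{x_0}|(\sup u)^N\le|\partial^+_u(\Omega)|$, and taking $N$-th roots yields \eqref{eq:sharp_polar_abp}.

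\textbf{Step 2: the equality case.} If $u=k$, equality holds because the measure of $\partial^+_k$ is concentrated at the vertex. Conversely, suppose equality holds. Then $\partial^+_u(\Omega)$ coincides with $-u(x_0)\Omega^\circ_{x_0}$ up to a null set. Concavity gives $u\ge k$, since $u$ lies above the cone interpolating the vertex and the boundary zero values along segments from $x_0$. If $u>k$ at some point, I would exhibit a slope $p$ outside $-u(x_0)\Omega^\circ_{x_0}$ lying in $\partial^+_u$ on a set of positive measure. For instance, near a boundary point where $u$ has a steeper descent than $k$, the supporting slopes exceed the polar constraint in some direction, and an open cone of such slopes appears. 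This contradiction is the step I expect to be the main obstacle: making ``strictly larger superdifferential image'' quantitative requires the openness argument just sketched.

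\textbf{Step 3: the smooth case.} For $u\in C^2(\Omega)$, the area formula \eqref{eq:area} with equality, applied with $\Gamma^+_u=\Omega$, turns $|\partial^+_u(\Omega)|$ into $\int_\Omega\det(-D^2u)$, which gives \eqref{eq:sharp_ma_smooth}. For asymptotic sharpness I will construct $u_\mu$ by taking the gauge $g$ of $\Omega-x_0$, replacing it by a smooth strictly convex approximant $g_\mu$ (mollify $g$ and add $\mu|x-x_0|^2$), and setting
\[
u_\mu = u(x_0)\bigl(1-\sqrt{g_\mu^2+\mu^2}\bigr),
\]
then renormalising affinely in the vertical direction so that $u_\mu=0$ on $\partial\Omega$. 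Since $u_\mu\to k$ uniformly and the superdifferential images $\nabla u_\mu(\Omega)$ converge to $-u(x_0)\Omega^\circ_{x_0}$ in Hausdorff distance, we get $\int\det(-D^2u_\mu)=|\nabla u_\mu(\Omega)|\to(\sup k)^N|\Omega^\circ_{x_0}|$. Here weak continuity of Monge--Ampère measures under uniform convergence can be cited from \cite{gutierrez2016}.
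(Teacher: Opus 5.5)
Your Step 1 and the area-formula half of Step 3 are the paper's argument: the cone $k$ at the maximum point, the inclusion $\partial^+_k(x_0)\subset\partial^+_u(\Omega)$ by vertical translation, the lemma's identification $\partial^+_k(x_0)=-u(x_0)\Omega^\circ_{x_0}$ (which indeed never uses minimality), and the area formula. Restricting to interior slopes of the polar is more careful than the paper. One correction there: you write that $\ell$ is lowered by some $c\ge 0$. But $(\ell-u)(x_0)=0$ gives $\min_{\overline\Omega}(\ell-u)\le 0$, so $\ell$ must be raised, and nothing gives $\ell\ge u$. The paper proves neither the equality characterization nor the regularized-cone sharpness; it only asserts them. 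Your Steps 2 and 3 do not yet prove them either.

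\textbf{Step 2.} The converse is only announced. Slopes at points near $\partial\Omega$ where $u$ is steeper than $k$ do lie outside $-u(x_0)\Omega^\circ_{x_0}$. However, they need not fill a set of positive measure: for piecewise affine $u$ there is one slope per face. So ``an open cone of such slopes appears'' is exactly the unproved point. The missing idea is to find openness in slope space. Set $\Phi(p)=\max_{\overline\Omega}(u(x)-p\cdot x)-\max_{\partial\Omega}(-p\cdot x)$. This is continuous in $p$, and $\Phi(p)>0$ forces the maximum of $u-p\cdot x$ to be interior, so $\{\Phi>0\}$ is an open subset of $\partial^+_u(\Omega)$. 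Now suppose $u(x_1)>k(x_1)$ and write $x_1=x_0+t_1(z-x_0)$ with $z\in\partial\Omega$. Let $\nu$ be an outer normal at $z$, so $\nu\cdot(z-x_0)>0$, and put $s_0=u(x_0)/(\nu\cdot(z-x_0))$. Then $\Phi(-s_0\nu)\ge u(x_1)-(1-t_1)u(x_0)=u(x_1)-k(x_1)>0$. Hence for $s$ slightly above $s_0$, $p=-s\nu$ still has $\Phi(p)>0$ but $-p\cdot(z-x_0)>u(x_0)$, i.e.\ $p\notin -u(x_0)\Omega^\circ_{x_0}$. A small ball around $p$ therefore lies in $\partial^+_u(\Omega)\setminus(-u(x_0)\Omega^\circ_{x_0})$, which gives strict inequality. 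For the easy direction, note that $k$ is not differentiable off $x_0$ when $\Omega$ is a polygon. The correct reason is that $k$ is affine on each segment from $x_0$ to $\partial\Omega$, so every supporting plane at $y\neq x_0$ also supports at $x_0$.

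\textbf{Step 3.} The family you construct is not admissible. The function $g_\mu=g*\rho_\mu+\mu|x-x_0|^2$ is not constant on $\partial\Omega$, so no vertical affine renormalisation of $u(x_0)(1-\sqrt{g_\mu^2+\mu^2})$ vanishes there. Also, weak continuity of Monge--Amp\`ere measures only tests compactly supported functions, so it cannot give convergence of the total mass, which may escape to $\partial\Omega$. For example, on $(-1,1)$ the functions $\min\{1-|x|+\mu,(1-|x|)/\mu\}$ converge uniformly to $1-|x|$, yet their slope image has length $2/\mu$. For your formula the right tool is pointwise: subgradients of the gauge lie in the convex set $\Omega^\circ_{x_0}$, so $\nabla(g*\rho_\mu)\in\Omega^\circ_{x_0}$. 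Hence $\nabla u_\mu(\Omega)$ lies in an $O(\mu)$-neighbourhood of $-u(x_0)\Omega^\circ_{x_0}$, and the matching lower bound comes from Step 1. But this proves sharpness only on the domains $\{\sqrt{g_\mu^2+\mu^2}<1\}$, which merely converge to $\Omega$. To stay on $\Omega$, take $u_\mu$ solving $\det(-D^2u_\mu)=f_\mu$ in $\Omega$ with $u_\mu=0$ on $\partial\Omega$. Here $f_\mu$ should be smooth, bounded below by a positive constant, concentrating at $x_0$, with $\int_\Omega f_\mu\to u(x_0)^N|\Omega^\circ_{x_0}|$. Then:
\begin{itemize}
\item Caffarelli's interior regularity gives $u_\mu\in C^2(\Omega)$.
\item Stability and uniqueness of Alexandrov solutions give $u_\mu\to k$ uniformly, so the maximum points $x_\mu\to x_0$ and $|\Omega^\circ_{x_\mu}|\to|\Omega^\circ_{x_0}|$.
\item $\int_\Omega\det(-D^2u_\mu)=\int_\Omega f_\mu$ holds exactly.
\end{itemize}
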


\newpage
\nocite{*}
\bibliographystyle{plain} 
\bibliography{references}
\bigskip
\textit{E-mail address: } 
\texttt{antonio.porricelli2@unina.it}
\newline
\textsc{Dipartimento di Matematica e Applicazioni "R. Caccioppoli", Università degli Studi di Napoli Federico II, Via Cintia, Complesso Universitario Monte S. Angelo, 80126, Napoli, Italy.}
\end{document}